\documentclass[a4paper]{amsart}
\usepackage{amssymb, enumitem}
\usepackage[all]{xy}
\usepackage{hyperref, aliascnt}
\usepackage{mathtools}
\usepackage{bbm}
\setcounter{tocdepth}{1}

\addtolength{\oddsidemargin}{-0.15in}
\addtolength{\evensidemargin}{-0.15in}
\addtolength{\textwidth}{0.3in}

\def\today{\number\day\space\ifcase\month\or   January\or February\or
   March\or April\or May\or June\or   July\or August\or September\or
   October\or November\or December\fi\   \number\year}

\theoremstyle{definition}

\newaliascnt{thmCt}{lma}
\newtheorem{thm}[thmCt]{Theorem}
\aliascntresetthe{thmCt}

\newaliascnt{corCt}{lma}
\newtheorem{cor}[corCt]{Corollary}
\aliascntresetthe{corCt}

\newaliascnt{propCt}{lma}
\newtheorem{prop}[propCt]{Proposition}
\aliascntresetthe{propCt}

\newtheorem*{thm*}{Theorem}
\newtheorem*{cor*}{Corollary}
\newtheorem*{prop*}{Proposition}

\newcounter{theoremintro}
\newtheorem{thmintro}[theoremintro]{Theorem}

\newtheorem{egintro}[theoremintro]{Example}

\newaliascnt{pgrCt}{lma}

\aliascntresetthe{pgrCt}

\newaliascnt{dfCt}{lma}
\newtheorem{df}[dfCt]{Definition}
\aliascntresetthe{dfCt}

\newaliascnt{remCt}{lma}
\newtheorem{rem}[remCt]{Remark}
\aliascntresetthe{remCt}

\newaliascnt{remsCt}{lma}

\aliascntresetthe{remsCt}

\newaliascnt{egCt}{lma}
\newtheorem{eg}[egCt]{Example}
\aliascntresetthe{egCt}

\newaliascnt{egsCt}{lma}

\aliascntresetthe{egsCt}

\newaliascnt{qstCt}{lma}

\aliascntresetthe{qstCt}

\newaliascnt{pbmCt}{lma}

\aliascntresetthe{pbmCt}

\newaliascnt{notaCt}{lma}
\newtheorem{nota}[notaCt]{Notation}
\aliascntresetthe{notaCt}

\newaliascnt{cnjCt}{lma}

\aliascntresetthe{cnjCt}

\newcommand{\beq}{\begin{equation}}
\newcommand{\eeq}{\end{equation}}
\newcommand{\beqa}{\begin{eqnarray*}}
\newcommand{\eeqa}{\end{eqnarray*}}
\newcommand{\bal}{\begin{align*}}
\newcommand{\eal}{\end{align*}}
\newcommand{\bi}{\begin{itemize}}
\newcommand{\ei}{\end{itemize}}
\newcommand{\be}{\begin{enumerate}}
\newcommand{\ee}{\end{enumerate}}

\newcommand{\ep}{\varepsilon}

\newcommand{\Q}{{\mathbb{Q}}}

\newcommand{\Z}{{\mathbb{Z}}}

\newcommand{\C}{{\mathbb{C}}}
\newcommand{\N}{{\mathbb{N}}}

\newcommand{\K}{{\mathcal{K}}}

\newcommand{\B}{{\mathcal{B}}}
\newcommand{\U}{{\mathcal{U}}}

\newcommand{\T}{{\mathbb{T}}}

\newcommand{\Ot}{{\mathcal{O}_2}}
\newcommand{\OI}{{\mathcal{O}_{\I}}}

\pagenumbering{arabic}

\newcommand{\id}{{\mathrm{id}}}

\newcommand{\diag}{{\mathrm{diag}}}

\newcommand{\Aut}{{\mathrm{Aut}}}

\newcommand{\Ext}{{\mathrm{Ext}}}

\newcommand{\Ad}{{\mathrm{Ad}}}



\newcommand{\ca}{$C^*$-algebra}

\newcommand{\uca}{unital $C^*$-algebra}

\newcommand{\Rp}{Rokhlin property}



\newcommand{\I}{\infty}

\title[KK-theory of circle actions]{KK-theory of circle actions with the Rokhlin property}

\author{Eusebio Gardella}

\date{\today}

\thanks{\emph{Supported by}: 
  US National Science Foundation grant
DMS-1101742; the Deutsche Forschungsgemeinschaft
(through SFB 878 and an eigene Stelle); and the Humboldt foundation.}

\address{Eusebio Gardella
Mathematisches Institut, Fachbereich Mathematik und Informatik der
Universit\"at M\"unster, Einsteinstrasse 62, 48149 M\"unster, Germany.}
\email{gardella@uni-muenster.de}
\urladdr{www.math.uni-muenster.de/u/gardella/}

\subjclass[2000]{Primary 46L55, 19K35; Secondary 46L35, 46L80}
\keywords{Rokhlin property; $K$-theory; crossed product; Kirchberg algebra.}

\begin{document}
\begin{abstract} 
We investigate the structure of circle actions with the Rokhlin property, particularly in relation
to equivariant $KK$-theory. Our main results are
$\mathbb{T}$-equivariant versions of celebrated results of 
Kirchberg:  
any Rokhlin action on a separable, nuclear \ca\ is 
$KK^\mathbb{T}$-equivalent to a Rokhlin action on a Kirchberg algebra; any Rokhlin action on an
exact separable \ca\ embeds equivariantly into 
$\mathcal{O}_2$ (with its unique Rokhlin action);
and two circle actions with the Rokhlin
property on a Kirchberg algebra are conjugate if
and only if they are $KK^\mathbb{T}$-equivalent. 

In the presence of the UCT, $KK^\mathbb{T}$-equivalence
for Rokhlin actions reduces to isomorphism of a 
$K$-theoretical invariant, namely of a canonical 
pure extension naturally associated to any Rokhlin
action, 
and we provide a complete 
description of the extensions that arise
from actions on nuclear $C^*$-algebras.
In contrast with the non-equivariant setting,
an isomorphism between the $K^\mathbb{T}$-theories of Rokhlin actions on Kirchberg algebras does not 
necessarily lift to a $KK^\mathbb{T}$-equivalence.
 \end{abstract}

\maketitle
 
\vspace{-0.7cm}
\renewcommand*{\thetheoremintro}{\Alph{theoremintro}}
\section{Introduction}

Crossed products have provided some of the most relevant examples in the theory of $C^*$-algebras, and the study of their structure and classification is 
a very active field of research. Moreover, the classification
of group actions has a long history within the theory of operator algebras.
For example, Connes' classification of automorphisms of the hyperfinite II$_1$-factor $\mathcal{R}$
was instrumental in his award-winning classification of amenable factors
\cite{Con_classification_1976}.
Connes' success motivated significant efforts towards classifying
amenable group actions on hyperfinite factors, which culminated almost two decades
later.

By comparison to the von Neumann algebra setting, 
the classification of C*-dynamical systems 
is a far less developed field of research. A major
difficulty is the complicated behavior that finite order automorphisms exhibit at 
the level of $K$-theory. Even in the absence of torsion in the acting group, the 
induced action on the trace space may be wild. 
These difficulties resulted in a 
somewhat scattered collection of results, 
and a lack of a systematic approach.
A notable exception is the fruitful analysis of the Rokhlin property in \ca s, 
as is seen in the works of Kishimoto \cite{Kis_rohlin_1995,Kis_rohlin_1996},
Matui \cite{Mat_ZNactions_2011}, 
Nakamura \cite{Nak_aperiodic_2000}, 
Izumi \cite{Izu_RpI_2004, Izu_RpII_2004}, Sato 
\cite{Sat_rohlin_2010}, Nawata \cite{Naw_finite_2016}, 
and the author and Santiago
\cite{GarSan_equivariant_2016}, to mention a few. 


The advances in Elliott's classification programme
(which is by now essentially complete; see \cite{EllGonLinNiu_classification_2015,TikWhiWin_quasidiagonality_2017})
suggest that group actions on purely infinite
\ca s may be more accessible, and there already
exist very encouraging results in this direction.
In \cite{Sza_equivariant_2018}, Szabo proved 
versions of Kirchberg's absoprtion results 
$\Ot\otimes A\cong \Ot$ and $\OI\otimes A\cong A$
(with suitable $A$) for outer actions of 
amenable groups, using actions on $\Ot$ and $\OI$ which
have a suitable version of the 
Rokhlin property. (An equivariant
version of the absorption result
$\Ot\otimes A\cong \Ot$ was proved for 
\emph{exact} groups by Suzuki \cite{Suz_equivariant_2020}.)
More recently,
Meyer \cite{Mey_classification_2019} began 
exploring the classification of 
actions of torsion-free amenable groups using 
$KK^G$-theory, particularly in what
refers to lifting an isomorphism between $K$-theoretical data to a $KK^G$-equivalence. A recurrent issue in this setting
is that satisfactory results can only be expected
if either the action has some variation of the 
Rokhlin property, or the group is torsion-free.


In this work, we use equivariant $KK$-theory to
study circle actions with the Rokhlin property,
and obtain $\T$-equivariant versions of celebrated
results of Kirchberg concerning 
simple, purely infinite, separable, nuclear
\ca s (also known as Kirchberg algebras);
see Theorems~\ref{thmintro:Classif}, \ref{thmintro:KKeqKir} and \ref{thmintro:Emb} below. 
By comparison to the \emph{continuous} 
Rokhlin property
(studied in \cite{AraKub_compact_2017} and \cite{Gar_Kir2}), Rokhlin actions are a much
richer class with less rigid behavior. 
Accordingly, more involved arguments are needed
in this setting.

The main reason to focus on circle actions is that the 
combination of one-dimensio\-na\-li\-ty with 
the fact that $\T$ is a Lie group produces 
phenomena that cannot be expected beyond this setting.
An example of this, which is crucial to our work and already fails for $\T^2$, is the existence of a predual automorphism: 





\begin{thmintro}
(See \autoref{thm:RpisDual}) Let $A$ be a unital \ca, and let
$\alpha\colon \T\to\Aut(A)$ be an action with the Rokhlin property. Then
there exists $\check{\alpha}\in\Aut(A^\alpha)$ such that $\alpha$ is 
conjugate to the dual action $\widehat{\check{\alpha}}$.
\end{thmintro}

We completely characterize the automorphisms
that arise as preduals of circle actions with the Rokhlin
property; see \autoref{thm:DualityRpAr}. Using this,
we show that every circle action with the \Rp\ has a 
naturally associated PExt-class. 

\begin{thmintro}\label{thmintro:PExt}
(See \autoref{thm:PureExt})
Let $A$ be a unital \ca, and let
$\alpha\colon \T\to\Aut(A)$ be a Rokhlin action. 
Then the natural map $K_0(A^\alpha)\hookrightarrow K_0(A)$ is an order-embedding, and there is a canonical \emph{pure} extension $\Ext_\ast(\alpha)$ 
given by
\[\xymatrix{0\ar[r] & K_\ast(A^\alpha)\ar[r] &K_\ast(A)\ar[r] & K_\ast(SA^\alpha)\ar[r]& 0}.\]
\end{thmintro}

The fact that the above extension is pure is far
from obvious, and it ultimately depends on the 
existence of a sequence of ucp maps $A\to A^\alpha$
which are asymptotically multiplicative and 
asymptotically the identity on $A^\alpha$. 

Our most interesting results are related to 
equivariant $KK^\T$-theory in the setting of Kirchberg algebras. 
When $A$ is a Kirchberg algebra, we 
show that so is $A^\alpha$ and that $\check{\alpha}$
is aperiodic (\autoref{prop:KirAperiodic}), which
gives us access to Nakamura's work \cite{Nak_aperiodic_2000}. 
We use this to obtain a $\T$-equivariant 
version of the Kirchberg-Phillips classification theorem
for actions with the Rokhlin property:

\begin{thmintro}\label{thmintro:Classif}
(See \autoref{thm:ClassRpKir})
Let $\alpha\colon \T\to\Aut(A)$ and $\beta\colon\T\to\Aut(B)$ be actions
on unital Kirchberg algebras with the Rokhlin property. Then $\alpha$ and 
$\beta$ are conjugate if and only if they are
unitally $KK^\T$-equivalent. In the presence of the 
UCT, this is in turn equivalent to the existence of a graded
isomorphism $\Ext_\ast(\alpha)\cong\Ext_\ast(\beta)$, which preserves
unit classes and is compatible with suspension shifts (see \autoref{df:IsomExt}).
\end{thmintro}

Unlike in Kirchberg-Phillips' classification, 
in the presence of the UCT it does not suffice 
to assume
that both actions have isomorphic $K^\T$-theory
(this is a big difference with the case of the 
continuous Rokhlin property \cite{AraKub_compact_2017, Gar_Kir2}).
It also does not suffice for the actions to 
have 
isomorphic Meyer's $L$-invariant 
$L_\ast^\T(A,\alpha)=K_\ast^\T(A,\alpha)\oplus K_\ast(A)$ (\cite{Mey_classification_2019}):

\begin{egintro}\label{egintro}(See \autoref{eg:EqKThyNotEnough}.)
There exist a UCT Kirchberg algebra $A$ and Rokhlin actions $\alpha,\beta\colon \T\to\Aut(A)$ such that $K_\ast^\T(A,\alpha)\cong K_\ast^\T(A,\beta)$ as $R(\T)$-modules, although $\alpha$ and $\beta$
are not $KK^\T$-equivalent. 
One can even construct the actions
so that $A^\alpha\cong A^\beta$ and $A\rtimes_\alpha\T\cong A\rtimes_\beta\T$, all satisfying the UCT.
\end{egintro}

The above example shows an interesting phenomenon, 
which we put into perspective. In Example~10.6 
in~\cite{RosSch_kunneth_1986}, Rosenberg and Schochet 
construct two circle actions
on commutative \ca s with isomorphic $K^\T$-theory, 
which are not $KK^\T$-equivalent. In their example, 
the underlying algebras are not even $KK$-equivalent, 
so the actions cannot be $KK^\T$-equivalent. 
As communicated to us by Claude Schochet,
Example~\ref{egintro} is the first construction 
of two circle actions on 
\emph{the same} \ca, satisfying the UCT, with
isomorphic fixed point algebras and 
crossed products, which all satisfy the UCT, 
and isomorphic $K^\T$-theory, that are 
not $KK^\T$-equivalent. 

We also obtain a range result in the 
context of Theorem~\ref{thmintro:Classif}, showing
that the only $K$-theoretic obstructions are the 
ones obtained in \autoref{thm:rangeInv}. 
Roughly speaking, for any \emph{pure} extension 
\[\tag{$\mathcal{E}_\ast$} \ \ \ 0\to K_\ast \to G_\ast \to K_{\ast+1}\to 0,\] 
there exists a circle action $\alpha\colon\T\to\Aut(A)$ on a UCT Kirchberg algebra $A$ such that
$\Ext_\ast(\alpha)$ is isomorphic to $\mathcal{E}_\ast$.

With a classification of Rokhlin actions on Kirchberg
algebras in terms of $KK^\T$-theory at our disposal, 
it is natural to ask which Rokhlin actions are 
$KK^\T$-equivalent to a Rokhlin action on a Kirchberg
algebra. As it turns out, Rokhlin actions on Kirchberg
algebras represent all separable, nuclear $KK^\T$-classes of Rokhlin actions:

\begin{thmintro}\label{thmintro:KKeqKir}
(See \autoref{thm:EveryRpKKequivKirch}.)
Let $A$ be a separable, nuclear, \uca, and let
$\alpha\colon \T\to\Aut(A)$ have the \Rp. 
Then there exist a unique unital Kirchberg algebra $D$ 
and a unique circle action $\delta\colon \T\to\Aut(D)$
with the \Rp\ such that $(A,\alpha)\sim_{KK^\T}(D,\delta)$ unitally.
\end{thmintro}

The theorem above cannot be extended to 
actions $\alpha$ that do not necessarily have 
the Rokhlin property, since there are 
obstructions to being $KK^\T$-equivalent 
to a Rokhlin action 
(for example as in Theorem~\ref{thmintro:PExt}).
Theorem~\ref{thmintro:KKeqKir} 
should be compared to
Theorem~2.1 of~\cite{Mey_classification_2019}, 
where Meyer
shows that every circle action on a separable, nuclear
\ca\ is $KK^\T$-equivalent to an outer action on a 
Kirchberg algebra. 
It is not clear from Meyer's construction 
that the resulting action on the 
Kirchberg algebra has the Rokhlin property if the original
one does.
We therefore could not adapt his argument to
our context, and instead use older ideas of Kirchberg, 
applied at the level of the predual $\check{\alpha}$. 
A key ingredient in the proof is the following 
$\T$-equivariant version of Kirchberg's $\Ot$-embedding 
theorem. 
 
\begin{thmintro}\label{thmintro:Emb} (See \autoref{thm:EmbedExact}.)
Let $A$ be a separable, exact, unital \ca, and let 
$\alpha\colon\T\to\Aut(A)$ have the \Rp.
Then there is a unital, equivariant embedding 
$(A,\alpha) \hookrightarrow (\Ot,\gamma)$,
where $\gamma$ is the unique Rokhlin action on $\Ot$. 
\end{thmintro}


The results here presented are an expanded version 
of Chapter~IX of my PhD thesis \cite{Gar_thesis_2015}. Since the first preprint version of this work appeared
on the arxiv, Arano and Kubota generalized the first
part of Theorem~C to compact groups other than $\T$;
see Proposition~4.8 in~\cite{AraKub_compact_2017}.
The methods are quite different, since we take full
advantage of the existence of a predual automorphism.
Moreover, for circle actions (as opposed to general
compact group actions), $KK^\T$-equivalence can be 
detected via a $K$-theoretical invariant in the 
presence of the UCT, and the range of this invariant 
can be completely
described. This makes the classification of circle
actions with the Rokhlin property comparatively more
accessible than that of general compact groups.


\vspace{0.2cm}

\textbf{Acknowledgements.} 
%
The author is grateful to 
a number of people for helpful 
discussions, correspondence, or feedback, including
Claude Schochet, Rasmus Bentmann, Martino Lupini, Ralf Meyer, Chris Phillips, and Hannes Thiel.

\section{Duality for circle actions with the Rokhlin property}
In this section, we study the Rokhlin property for circle actions
in connection to duality. There are two main results in this section.
First, it is shown that every circle action with the Rokhlin
property is a dual action, that is, there is an automorphism
of the fixed point algebra whose dual action is conjugate to the 
given one; see \autoref{thm:RpisDual}. 
Such an automorphism is essentially unique, and is called the 
\emph{predual} automorphism.
Second, we characterize those automorphisms that 
are predual to a circle action with the Rokhlin property; 
see \autoref{thm:DualityRpAr}. 

We begin by recalling the definition of the \Rp\ for a circle action (Definition~3.2 in~\cite{HirWin_Rp}) in a way that is useful for our purposes.

\begin{df}\label{def RP} 
Let $A$ be a \uca\ and let $\alpha\colon \T\to\Aut(A)$ be an 
action. We say that $\alpha$ has the \emph{Rokhlin property} if for every $\varepsilon>0$ and
every compact subset $F\subseteq A$, there exists a 
unitary $u\in \U(A)$ such that
\be
\item[(a)] $\|\alpha_ z(u)- z u\|<\varepsilon$ for all $ z\in\T$.
\item[(b)] $\|ua-au\|<\varepsilon$ for all $a\in F$.
\ee
\end{df}

Next, we show that we can replace the unitary $u$
in the above definition by a nearby unitary 
which satisfies condition (a) exactly.
In the terminology of \cite{Phi_equivSP}, the following shows that
the action of $\T$ on $C(\T)$ is equivariantly
semiprojective. 
 
\begin{prop}\label{prop:EqSemiProj} 
For every $\ep>0$ there exists $\delta>0$ with the following
property: whenever $\alpha\colon \T\to\Aut(A)$ is an action on a
\uca\ $A$ and $u\in \U(A)$ is a unitary 
satisfying $\|\alpha_ z(u)- z u\|<\delta$ for all $ z\in\T$, 
then there exists a unitary $v\in \U(A)$ with $\|u-v\|<\ep$ and 
$\alpha_ z(v)= z v$ for all $ z\in\T$.
\end{prop}
\begin{proof} 
Given $\ep>0$, choose $\delta<\frac{1}{3}$ small enough so that
\[\label{eqn:epSmall}\tag{2.1}\frac{2\delta}{\sqrt{1-2\delta}}+\delta<\ep.\]
Let $\alpha\colon \T\to\Aut(A)$ and $u\in\U(A)$ be as in
the statement.
Set
$x =\int_{\T}\overline{ z}\alpha_ z(u)\ d z\in A$.
Then $\|x\|\leq 1$ and $\|x-u\|\leq \delta$. One checks that 
$\|x^*x-1\|\leq 2\delta< 1$, so $x^*x$ is invertible. Moreover,
\[\label{eqn:xxstar}\tag{2.2}
\big\|(x^*x)^{-1}\big\|\leq \frac{1}{1-\|1-x^*x\|}\leq\frac{1}{1-2\delta}.\]

Set $u= x(x^*x)^{-\frac{1}{2}}$. Then $u$ is a unitary in $A$. Using that $\|x\|\leq 1$ at the first step,
and that $0 \leq 1- (x^*x)^{\frac{1}{2}} \leq  1- x^*x$ at the second step, we get
\begin{align*} \|u-x\| &\leq \big\|(x^*x)^{-\frac{1}{2}}-1\big\|
\leq \big\|(x^*x)^{-\frac{1}{2}}\big\| \big\|1-(x^*x)^{\frac{1}{2}}\big\|\\
&\stackrel{(\ref{eqn:xxstar})}{\leq} \frac{1}{\sqrt{1-2\delta}} \|1-x^*x\| \leq \frac{2\delta}{\sqrt{1-2\delta}}.
\end{align*}
Thus 
\[\|u-v\|\leq \|u-x\|+\|x-v\|\leq 
\frac{2\delta}{\sqrt{1-2\delta}}+\delta\stackrel{(\ref{eqn:epSmall})}{<} \ep.\]
For $ z\in \T$, we have 
\[\alpha_ z(x)=\int_\T \overline{\omega}\alpha_{ z\omega}(u)d\omega=
\int_\T  z\overline{\omega}\alpha_{\omega}(u)d\omega= z x.
\]
It follows that $\alpha_ z(x^*x)=x^*x$ and hence
$\alpha_ z(u)=\alpha_ z\big(x(x^*x)^{-\frac{1}{2}}\big)= z u$,
for all $ z\in\T$, so $u$ satisfies the condition in the statement.
\end{proof}

It follows from \autoref{prop:EqSemiProj} that condition~(1) in
\autoref{def RP} can be replaced with $\alpha_ z(u)= z u$
for all $ z\in\T$. 

\begin{thm}\label{thm:RpisDual} Let $A$ be a \uca\ and let $\alpha\colon\T\to\Aut(A)$ be an action with the \Rp. 
\be\item There exists an
automorphism $\theta\in \Aut(A^\alpha)$ such that 
$(A^\alpha\rtimes_\theta\Z,\widehat{\theta})$ is conjugate to $(A,\alpha)$.
\item If $\theta'\in \Aut(A^\alpha)$ is another automorphism for which
$(A^\alpha\rtimes_{\theta'}\Z,\widehat{\theta'})$ is conjugate to $(A,\alpha)$, then there is a
unitary $w\in A^\alpha$ such that $\theta=\Ad(w)\circ\theta'$.
\ee
\end{thm}
\begin{proof}
(1). Using \autoref{prop:EqSemiProj}, let $u\in\U(A)$ be a unitary satisfying
$\alpha_z (u)= z  u$ for all $ z \in\T$. 
For $a\in A^\alpha$, we have $\alpha_z (uau^*)=uau^*$ for all $ z \in\T$, and thus conjugation by $u$ determines an automorphism $\theta$ of
$A^\alpha$. Let $v\in A^\alpha\rtimes_\theta\Z$ denote the canonical unitary
implementing $\theta$. Since the pair $(\id_{A^\alpha}, u)$ is a covariant representation of $(A^\alpha,\theta)$ on $A$, there is a unique homomorphism $\varphi\colon A^\alpha\rtimes_\theta\Z\to A$
satisfying $\varphi(a)=a$ for all $a\in A^\alpha$ and $\varphi(v)=u$.

We claim that $\varphi$ is an equivariant isomorphism. Equivariance of $\varphi$ is clear, since for all $ z \in\T$ we have $\widehat{\theta}_z (a)=a$ for all $a\in A^\alpha$ and $\widehat{\theta}_z (v)= z  v$.
Injectivity of $\varphi$ follows from the fact that $\id_{A^\alpha}$ is 
injective (and that $\T$ is amenable). 
Surjectivity can be deduced using spectral subspaces, as follows.
Given $n\in\Z$, we set 
\[A_n=\{a\in A\colon \alpha_z (a)= z ^n a\mbox{ for all }  z \in\T\},
\]
which is a closed subspace of $A$. It is well-known that $\sum_{n\in\Z}A_n$ 
is dense in $A$; see, for example, part~(ix) of~Theorem~8.1.4 in~\cite{Ped_algebras_1979}. 
Note that $A_0=A^\alpha$ and that 
$u$ belongs to $A_1$. Moreover, using that $u$ is a unitary, it is easy to see that $A_n=u^nA_0$ for all $n\in\Z$. In particular, $A$ is generated as a 
\ca\ by $A_0$ and $u$. Since $A_0\cup\{u\}$ is contained in the image of $\varphi$, 
we conclude that $\varphi$ is surjective.

(2). Let $\theta'$ be as in the statement, and let $\varphi\colon (A^\alpha\rtimes_{\theta}\Z,\widehat{\theta})\to (A,\alpha)$ and $\varphi'\colon (A^\alpha\rtimes_{\theta'}\Z,\widehat{\theta'})\to (A,\alpha)$ be equivariant
isomorphisms. Let $v$ be the canonical
unitary in $A^\alpha\rtimes_\theta\Z$ that implements $\widehat{\theta}$, and let $v'$ be the canonical unitary in $A^\alpha\rtimes_{\theta'}\Z$
that implements $\widehat{\theta'}$. Set $w=\varphi(v)\varphi'(v')^*$, which is a unitary in $A$. We claim that $w$ is fixed by $\alpha$. 
For $ z\in\T$, we use equivariance of $\varphi$ and $\varphi'$ to get
$$\alpha_ z(w)=\varphi(\widehat{\theta}_ z(v))\varphi'(\widehat{\theta'}_ z(v'))^*)= z\varphi(v)\overline{ z}\varphi'(v')=w.$$
Finally, given $a\in A^\alpha$, we have
\begin{align*}(\Ad(w)\circ\theta)(a)&=(\varphi(v)\varphi'(v')^*)(\varphi'(v')a\varphi'(v')^*)(\varphi'(v')\varphi(v)^*)\\
 &=\varphi(v)a\varphi(v)^*=\theta'(a).\qedhere
\end{align*}\end{proof}

\begin{df}\label{df:predual}
Let $\alpha\colon\T\to\Aut(A)$ be an action with the \Rp.
In view of \autoref{thm:RpisDual}, we denote by $\check{\alpha}\in\Aut(A^\alpha)$ the unique automorphism for which $\widehat{\check{\alpha}}$ is conjugate
to $\alpha$. We call $\check{\alpha}$ the \emph{predual automorphism} of 
$\alpha$.
\end{df}

\begin{cor}\label{cor: FixedPtCP} Let $A$ be a \ca\ and let $\alpha\colon\T\to\Aut(A)$ be an action with the \Rp. Then there is a natural isomorphism
$A\rtimes_\alpha\T\cong A^\alpha\otimes\K(L^2(\T))$.\end{cor}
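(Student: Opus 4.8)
The plan is to combine Theorem \ref{rokhlin actions are dual actions} with Takai duality. First I would apply Theorem \ref{rokhlin actions are dual actions} to obtain an automorphism $\theta$ of $A^\alpha$ together with an equivariant isomorphism $\varphi\colon (A^\alpha\rtimes_\theta\Z,\widehat{\theta})\to (A,\alpha)$. Since an equivariant isomorphism of $G$-algebras induces, functorially, an isomorphism of the associated crossed products, $\varphi$ yields an isomorphism
$$A\rtimes_\alpha\T\cong (A^\alpha\rtimes_\theta\Z)\rtimes_{\widehat{\theta}}\T.$$

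Next I would apply Takai duality (Theorem 7.9.3 in \cite{Ped_Book}) to the integer action $\theta$ on $A^\alpha$. Viewing $\T=\widehat{\Z}$ as the Pontryagin dual of $\Z$ and $\widehat{\theta}$ as the corresponding dual action, Takai duality provides a natural isomorphism
$$(A^\alpha\rtimes_\theta\Z)\rtimes_{\widehat{\theta}}\T\cong A^\alpha\otimes\K(L^2(\Z)).$$
Finally, the Fourier transform implements a unitary $L^2(\Z)=\ell^2(\Z)\cong L^2(\T)$, under which $\{z^n\}_{n\in\Z}$ corresponds to the standard basis of $\ell^2(\Z)$; this induces an isomorphism $\K(L^2(\Z))\cong\K(L^2(\T))$. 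Stringing these identifications together gives the desired natural isomorphism $A\rtimes_\alpha\T\cong A^\alpha\otimes\K(L^2(\T))$.

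I do not expect a genuine obstacle here, since the real content is already packaged in Theorem \ref{rokhlin actions are dual actions}. The only points requiring care are bookkeeping: verifying that the equivariant isomorphism $\varphi$ genuinely descends to an isomorphism of crossed products (so that $A\rtimes_\alpha\T$ and $(A^\alpha\rtimes_\theta\Z)\rtimes_{\widehat{\theta}}\T$ are isomorphic, not merely Morita equivalent as in Remark \ref{fixed point and cp are Mequiv}), and correctly matching the dual groups and Hilbert spaces appearing in Takai duality, namely $\widehat{\Z}=\T$ and $L^2(\Z)\cong L^2(\T)$. I would also note explicitly that, in contrast to the $K$-theoretic statement recorded in Remark \ref{fixed point and cp are Mequiv}, no separability hypothesis on $A$ is needed, as both Theorem \ref{rokhlin actions are dual actions} and Takai duality hold without it; this is precisely the promised strengthening.
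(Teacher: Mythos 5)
Your proposal is correct and takes essentially the same route as the paper, whose proof is the one-liner that the corollary is ``an immediate consequence of Theorem \ref{rokhlin actions are dual actions} together with the natural isomorphism given by Takai duality.'' The bookkeeping you spell out --- passing the equivariant isomorphism $\varphi$ through the crossed-product functor, applying Takai duality to $(\Z,A^\alpha,\theta)$ with $\widehat{\Z}=\T$, and identifying $\ell^2(\Z)\cong L^2(\T)$ via the Fourier transform --- is exactly the implicit content of that one-line proof, and your observation that no separability is needed matches the paper's remark that the corollary strengthens the Morita-equivalence statement of Remark \ref{fixed point and cp are Mequiv}.
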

\begin{proof} This is an immediate from\autoref{thm:RpisDual} and
Takai duality. \end{proof}

Obtaining a characterization 
of those automorphisms that arise as preduals of Rokhlin actions
as in \autoref{df:predual} will be a critical
tool in the rest of this work.
Such a characterization is obtained in \autoref{thm:DualityRpAr},
using the following notion:

\begin{df}\label{df:AppRep} Let $B$ be a \ca\ and let $\beta$ be an automorphism of $B$. Then $\beta$ is said to be \emph{approximately representable} if for every
finite subset $F\subseteq B$ and every $\ep>0$, there exists a contraction
$v\in B$ satisfying
\bi\item[(a)] $\|v^*v-vv^*\|<\ep$;
\item[(b)] $\|v^*vb-b\|<\ep$ for all $b\in F$;
\item[(c)] $\|\beta(v)-v\|<\ep$; and
\item[(d)] $\|\beta(b)-vbv^*\|<\ep$ for all $b\in F$.
\ei
\end{df}

Using functional calculus, it is clear that the contraction $v$ in the above
definition can be chosen to be a unitary whenever $B$ is unital. In particular, 
approximately representable automorphisms of unital \ca s are approximately
inner. 



\begin{rem}\label{rem:ConvProduct}
We endow $\T$ with its Haar probability measure.
For an action $\alpha\colon\T\to\Aut(A)$ 
on a \uca\ $A$,
we endow $L^1(\T,A)$ with the usual $L^1$-norm
$\|\cdot\|_1$
and the operations of
twisted convolution and involution
\[(\xi\ast\eta)(z)=\int_\T \xi(\omega)\alpha_\omega(\eta(\omega^{-1}z))\ d\omega \ \ \mbox{ and } \ \ \xi^*(z)=\alpha_z(\xi(\overline{z})^*)\]
for all $\xi,\eta\in L^1(\T,A)$ and all $z\in\T$. 
Then $L^1(\T,A)$ is a dense
$\ast$-subalgebra of $A\rtimes_\alpha\T$, and the 
canonical inclusion is contractive with respect to
the $L^1$-norm on $L^1(\T,A)$ and the $C^*$-norm
on $A\rtimes_\alpha\T$. 
Recall that there is a canonical nondegenerate 
inclusion $C^*(\T)\subseteq A\rtimes_\alpha \T$;
in particular, 
any (contractive) approximate 
identity for $C^*(\T)$ is also a (contractive) 
approximate identity for $A\rtimes_\alpha\T$.
Recall that the dual automorphism
$\widehat{\alpha}\in\Aut(A\rtimes_\alpha\T)$ is 
given by 
$\widehat{\alpha}(fa)(z)=z f(z)a$ for all 
$z\in\T$. 
\end{rem}

Next, we show that approximate 
representability is dual to the
Rokhlin property.

\begin{prop}\label{thm:DualityRpAr} 
Let $A$ be a \uca, let $\alpha\colon\T\to\Aut(A)$ be an action, and let $\beta\in \Aut(A)$ be an automorphism.
\be
\item The action $\alpha$ has the \Rp\ if and only if $\widehat{\alpha}\in\Aut(A\rtimes_\alpha \T)$ is approximately representable.
\item The automorphism $\beta$ is approximately representable if and only if $\widehat{\beta}\colon \T\to\Aut(A\rtimes_\beta\Z)$ has the
Rokhlin property.
\ee
\end{prop}
\begin{proof} 
(1). Assume that $\alpha$ has the \Rp.
Let $F\subseteq A\rtimes_\alpha\T$ be a finite subset and let 
$\ep>0$. For $\xi\in L^1(\T)$ and $a\in A$, write
$\xi a$ for the 
for the function 
given by $(\xi a)(z)=\xi(z)a$ for all $z\in\T$.
Since the linear span of the 
elements of this form is dense in $L^1(\T,A)$, and
hence also in $A\rtimes_\alpha\T$,
we may assume that 
there exist finite subsets $F_A\subseteq A$ and $F_\T\subseteq L^1(\T)$ such that every element of $F$ has the form 
$\xi a$ for $a\in F_A$ and $\xi\in F_\T$.
Without loss of generality, we may assume that
the sets $F_A$ and $F_\T$ contain only self-adjoint contractions.

Let $f\colon \T\to\C$ be a positive, continuous 
function whose support is a small enough neighborhood of
$1\in \T$ so that the following conditions are satisfied:
\be\item[(i)] $\|(f\ast f)b-b\|<\ep$ for all 
$b\in F\cup \widehat{\alpha}(F)$;
\item[(ii)] $\|f\|_1=1=\|f\ast f\|_1$;
\item[(iii)] $f(z)=f_t(\overline{z})$ for all 
$z\in\T$;
\item[(iv)] with $\widetilde{f}(z)=z f(z)$ for all
$z\in\T$, we have 
\[\|f-\widetilde{f}\|_1<\ep \ \ \mbox{ and } \ \ 
\|f\ast f-\widetilde{f}\ast \widetilde{f}\|_1<\ep;
\]
\item[(v)] given $\xi\in F_\T$ and $a\in F_A$,
if $z,\sigma,\omega\in \T$ satisfy
$f(\omega)f(\overline{z}\sigma\omega)\neq 0$, then
\[\|\alpha_\omega(a)-a\|<\frac{\ep}{2} \ \ \mbox{ 
 and } \ \ |\xi(\sigma)-\xi(z)|<\frac{\ep}{2}.
\]
\ee 


Using \autoref{prop:EqSemiProj}, find a unitary $u\in A$
satisfying 
\be\item[(vi)] $\alpha_\zeta(u)=\zeta u$ for all $\zeta\in\T$, and 
\item[(vii)] $\|ua-au\|<\ep/2$ for all $a\in \bigcup_{\omega\in\T}\alpha_\omega(F_A)$.
\ee
We
regard $u$ as a unitary in the multiplier algebra of $A\rtimes_\alpha\T$ 
via the canonical unital embedding $A\hookrightarrow M(A\rtimes_\alpha\T)$, 
and set $v=fu^*$. Then $v$ is a contraction 
in $L^1(\T,A)$, and hence also in $A\rtimes_\alpha\T$. 
We proceed to check the conditions in 
\autoref{df:AppRep}. Let $z\in \T$. Then 
\begin{align*}
(v^*\ast v)(z)
&= \int_\T v^*(\omega)\alpha_\omega(v(\overline{\omega}z))\ d\omega
= \int_\T \omega f(\omega)u\alpha_\omega(f(\overline{\omega}z)u^*)\ d\omega\\
&\stackrel{\mathrm{(vi)}}{=} \int_\T \omega f(\omega)uf(\overline{\omega}z)\overline{\omega}u^*\ d\omega = (f\ast f)(z).
\end{align*}
Thus, $v^*\ast v=f\ast f$ and hence 
condition (a) in \autoref{df:AppRep} follows
from condition (ii) above. 
In order to check (b), we compute as follows for $z\in\T$:
\begin{align*}
(v\ast v^*)(z)
&= \int_\T v(\omega)\alpha_\omega(v^*(\overline{\omega}z))\ d\omega
= \int_\T f(\omega)u^*\alpha_\omega(\overline{\omega}z f(\overline{\omega}z)u)\ d\omega\\
&\stackrel{\mathrm{(vi)}}{=} \int_\T f(\omega)u^*\overline{\omega}z f(\overline{\omega}z)\omega u\ d\omega
= \int_\T \omega f(\omega)\overline{\omega}z f(\overline{\omega}z)\ 
d\omega
= (\widetilde{f}\ast \widetilde{f})(z).
\end{align*}
Thus $v\ast v^*=\widetilde{f}\ast \widetilde{f}$.
We deduce that 
\[\|v^*v-vv^*\|\leq \|v^*\ast v- v\ast v^*\|_1=\|f\ast f-
\widetilde{f}\ast \widetilde{f}\|_1\stackrel{\mathrm{(iv)}}{<}\ep,\]
as desired. In order to check (c), 
let $\zeta\in\T$. Then
\[\widehat{\alpha}(v)(z)=z f(z) u^*=\widetilde{f}(z)u^*,\]
so $\widehat{\alpha}(v)=\widetilde{f}u^*$.
Using this at the second step, we get
\[
\|\widehat{\alpha}(v)-v\|\leq \|\widehat{\alpha}(v)-v\|_1
=\|f-\widetilde{f}\|_1\stackrel{\mathrm{(iv)}}{<}\ep,\]
as desired.
Finally, in order to check (d), it suffices
to take $\xi\in F_\T$ and $a\in F_A$, and show the 
desired inequality for $b=\xi a$. 
Given
$z\in\T$, we have
\begin{align*}
(v\ast b \ast v^*)(z)
&=\int_\T v(\omega)\alpha_\omega((b \ast v^*)(\overline{\omega}z)) \ d\omega \\
&=\int_\T f(\omega)u^*\alpha_\omega\Big( 
\int_\T 
\xi(\sigma)a \alpha_\sigma(v^*(\overline{\sigma\omega}z)) 
\ d\sigma \Big) d\omega \\
&=\int_\T\int_\T f(\omega)u^*\xi(\sigma)\alpha_\omega(a)
\alpha_{\omega\sigma 
}\big(\overline{\sigma\omega}z
f(\overline{\sigma\omega}z)u
\big) d\sigma d\omega \\
&\stackrel{\mathrm{(vi)}}{=}
\int_\T\int_\T f(\omega)u^* 
\xi(\sigma)\alpha_\omega(a) 
\overline{\sigma\omega}z
f(\overline{\sigma\omega}z)
\omega\sigma u
\ d\sigma d\omega \\
&=
z\int_\T\int_\T  
\xi(\sigma)u^*\alpha_\omega(a)u f(\omega)
f(\overline{\sigma\omega}z)
\ d\sigma d\omega\\
&\stackrel{\mathrm{(vii)}}{\approx}_{\!\!\frac{\ep}{2}}
z\int_\T\int_\T  
\xi(\sigma)\alpha_\omega(a) f(\omega)
f(\overline{\sigma\omega}z)
\ d\sigma d\omega. 
\end{align*}
By (iv), if in the above expression
we replace $\xi(\sigma)\alpha_\omega(a)$
by $\xi(z)a$, we obtain an element in $A$ 
whose distance to $(v\ast b \ast v^*)(z)$ is at most 
$\ep/2$. 
Hence,
\begin{align*}
(v\ast b \ast v^*)(z)&\approx_{\ep} 
z \xi(z)a\int_\T\int_\T f(\omega) 
f(\overline{\sigma\omega}z)
\ d\sigma d\omega \\
&= z b(z)
\int_\T (f\ast f)(\overline{\sigma}z)\ d\sigma
= z b(z)\|f\ast f\|_1\stackrel{\mathrm{(ii)}}{=}
\widehat{\alpha}(b)(z).\end{align*}
We conclude that
\[
\|vbv^*-\widehat{\alpha}(b)\|\leq
\|v\ast b \ast v^*-\widehat{\alpha}(b)\|_1<\ep,
\]
as desired. This shows 
that $\widehat{\alpha}$ is approximately representable.

Conversely, assume that $\widehat{\alpha}$ is approximately representable. 
Denote the left regular representation of $G$ by 
$\lambda\colon \T\to\U(L^2(\T))$.
By Takai duality, there is a canonical equivariant identification
\[\label{eqn:3.1}\tag{2.3}(A\rtimes_\alpha\T\rtimes_{\widehat{\alpha}}\Z,\widehat{\widehat{\alpha}})\cong (A\otimes\K(L^2(\T)),\alpha\otimes\Ad(\lambda)).\] 

Let
$p\in \K(L^2(\T))$ be the projection onto the constant functions, and let $e\in M(A\rtimes_\alpha\T\rtimes_{\widehat{\alpha}}\Z)$
be the projection corresponding to $1_A\otimes p$
under the identification in (\ref{eqn:3.1}). Then $e$ and 
$p$ are $\T$-invariant, and 
there is a canonical equivariant isomorphism
\[\label{eqn:3.2}\tag{2.4}
\left(e(A\rtimes_\alpha\T\rtimes_{\widehat{\alpha}}\Z)e,\widehat{\widehat{\alpha}}\right)\cong (A,\alpha).\]

Let $u\in M(A\rtimes_\alpha\T\rtimes_{\widehat{\alpha}}\Z)$ be the canonical unitary implementing $\widehat{\alpha}$.
Let $F\subseteq A$ be a finite subset and let $\ep>0$.
Let $\delta>0$ such that whenever $x\in A$ satisfies $\|x^*x-1\|<\delta$
and $\|xx^*-1\|<\delta$, then there is $w\in \U(A)$ with $\|w-x\|<\ep/2$.
Set 
\[F''=\{e\}\cup \{a\otimes p\colon a\in F\}\subseteq A\rtimes_\alpha\T\rtimes_{\widehat{\alpha}}\Z.\]
Let $F'\subseteq A\rtimes_\alpha\T$ be a finite subset and let $n\in\N$ 
such that any
element in $F''$ is within $\ep/2$ of the span of $\{bu^k\colon b\in F', -n\leq k\leq n\}$.
Using approximate representability of $\widehat{\alpha}$, let $v\in A\rtimes_\alpha\T$
be a contraction satisfying conditions (a), (b), (c)
and (d) in \autoref{df:AppRep} for 
$\ep_0=\min\big\{\frac{\ep}{26n^2|F'|},\delta\big\}$ and $F'$. 
Set $y=v^*u$, which is a contraction in $A\rtimes_\alpha\T\rtimes_{\widehat{\alpha}}\Z$. Then 
\[\label{eqn:3.3}\tag{2.5}
\widehat{\widehat{\alpha}}_z(y)=v^*\widehat{\widehat{\alpha}}_z(u)=z v^*u=z y\]
for all $z\in\T$. Moreover, given $b\in F'$ and $k\in \Z$ with $|k|\leq n$, 
we have
\begin{align*}
ybu^k&=v^*ubu^k=v^*\widehat{\alpha}(b)u^{k+1}\stackrel{\mathrm{(b)}}{\approx}_{\!\ep_0}
v^*\widehat{\alpha}(b)v^*vu^{k+1}\\
&\stackrel{\mathrm{(a)}}{\approx}_{\!\ep_0}
v^*\widehat{\alpha}(b)vv^*u^{k+1}
\stackrel{\mathrm{(d)}}{\approx}_{\!\ep_0}
v^*vbv^*vv^*u^{k+1}\\
&\stackrel{\mathrm{(b)}}{\approx}_{\!2\ep_0}
bv^*u^{k+1}
\stackrel{\mathrm{(c)}}{\approx}_{\!k\ep_0}bu^kv^*u=bu^ky.
\end{align*}
It follows from the choice of $F'$, $n$ and $\ep_0$ that 
\[\label{eqn:3.4}\tag{2.6}\|yc-cy\|<\frac{\ep}{2}\] 
for every $c\in F''$.
Set $x=eye$, which we regard as an element in $A$. By
(\ref{eqn:3.3}) we have
$\alpha_z(x)=z x$ for all $z\in\T$, since $e$ is $\T$-invariant.
For $a\in F$ we have
\[\|xax^*-a\|=\|eye(a\otimes p)ey^*e-(a\otimes p)\|\leq \|y(a\otimes p)y^*-(a\otimes p)\|
\stackrel{(\ref{eqn:3.4})}{<}
\frac{\ep}{2},\]
since $a\otimes p$ belongs to $F''$.
Moreover, $\|x^*x-1\|=\|ey^*eye-e\|<\ep_0\leq \delta$, and similarly $\|xx^*-1\|<\delta$.
By the choice of $\delta$, there exists a unitary $w\in A$ such that $\|w-x\|<\ep/2$.
It is then straightforward to check that $\|wa-aw\|<\ep$ for all $a\in F$ and 
$\max_{z\in\T}\|\alpha_z(w)-z w\|<\ep/2$.  
This shows that $\alpha$ has the \Rp.

(2). Assume that $\beta$ is approximately representable. Let $F$ be a finite subset of $A\rtimes_{\beta}\Z$ and let $\varepsilon>0$. Denote by $u$ the canonical unitary in the
crossed product. Since $A$ and $u$ generate $A\rtimes_{\beta}\Z$, one can assume that $F=F'\cup\{u\}$, where $F'$ is a finite subset of $A$.
Using approximate representability for $\beta$, find $v\in \U(A)$ with
\bi\item $\|\beta(b)-vbv^*\|<\varepsilon$ for all $b\in \beta(F')$, and
\item $\|\beta(v)-v\|<\varepsilon$.\ei

Set $w=v^*u$, which is a unitary in $A\rtimes_\beta\Z$. For $b\in F'$ we have
\[wb=v^*ub=v^*\beta(b)u\approx_\ep bv^*u=bw.\]
Moreover, 
\[wu=v^*uu=u\beta^{-1}(v)u\approx_\ep uw.\]
It follows that $\|wa-aw\|<\ep$ for all $a\in F=F'\cup\{u\}$.
On the other hand,
\[\widehat{\beta}_ z (w)=\widehat{\beta}_ z (v^*u)=v^*( z u)= z w\]
for all $ z\in \T$. Thus, $w$ is the desired unitary, and $\widehat{\beta}$ has the Rokhlin property.

Conversely, assume that $\widehat{\beta}$ has the Rokhlin property.
We continue to denote by $u$ the canonical unitary in $A\rtimes_\beta\Z$ that implements $\beta$.
Let $F\subseteq A$ be a finite subset and set $F'=F\cup\{u\}\subseteq A\rtimes_\beta\Z$.
Use \autoref{prop:EqSemiProj}
to choose a unitary $w\in \U(A\rtimes_\beta\Z)$ such that
\bi\item $\widehat{\beta}_ z(w)= z w$ for all $ z\in \T$;
\item $\|wb-bw\|=0$ for all $b\in F'$.\ei

Set $v=uw^*\in A\rtimes_\beta\Z$. Then the first
condition above implies that
$\widehat{\beta}_ z(v)=v$ for all $ z\in\T$, and hence the unitary $v$ belongs to the fixed point algebra
$(A\rtimes_\beta\Z)^{\widehat{\beta}}$, which equals $A$
by Proposition~7.8.9 in~\cite{Ped_algebras_1979}. 
For $a\in F$, we have
\[
\|vav^*-\beta(a)\|=\|uw^*awu^*-uau^*\|=\|w^*aw-a\|<\ep.
\]
Moreover, 
$\|\beta(v)-v\|=\|u(uw^*)u^*-uw^*\|<\ep$, since $u\in F'$.
It follows that $v$ satisfies the conditions of \autoref{df:AppRep},
so $\beta$ is approximately representable.\end{proof}

Using \autoref{thm:DualityRpAr}, we show that for actions with 
the \Rp, every ideal in the crossed product is induced by an 
invariant ideal of the algebra.

\begin{prop}\label{simplicity is preserved} Let $A$ be a \uca\ and let $\alpha\colon\T\to\Aut(A)$ have the \Rp. Then 
every ideal in $A^\alpha$ has the form $I\cap A^\alpha$, for some 
$\T$-invariant ideal $I$ in $A$, and every ideal in $A\rtimes_\alpha\T$ has the form $I\rtimes_\alpha \T$ for some $\T$-invariant ideal 
$I$ in $A$. 
In particular, if $A$ is simple then so are $A^\alpha$ and 
$A\rtimes_\alpha \T$.
\end{prop}
\begin{proof} 
Since $A^\alpha\otimes\K(\ell^2(\Z))\cong A\rtimes_\alpha\T$ 
by \autoref{cor: FixedPtCP}, it is enough to show the statement for 
$A^\alpha$. We identify $A$ with $A^\alpha\rtimes_{\check{\alpha}}\Z$,
where $\check{\alpha}$ is the predual of $\alpha$; see \autoref{df:predual}. Let $J$ be an ideal in $A^\alpha$. Since $\check{\alpha}$ is
approximately inner by part~(2) of \autoref{thm:DualityRpAr}, it follows that $\check{\alpha}(J)=J$. Hence $I=J\rtimes_{\check{\alpha}}\Z$ is 
canonically an ideal in $A$ satisfying $I\cap A^\alpha=J$, as desired.\end{proof}

\section{\texorpdfstring{$K$}{K}-theoretic obstructions to the Rokhlin property}

In this section, we study the $K$-theory of \ca s that admit circle
actions with the \Rp. First, we show that 
the canonical
inclusion $A^\alpha\to A$ induces an order-embedding
$K_\ast(\iota)\colon K_\ast(A^\alpha)\to K_\ast(A)$; see \autoref{prop:InjKthy}. Moreover, the quotient of $K_\ast(A)$
by $K_\ast(A^\alpha)$ can be canonically identified with 
$K_\ast(SA^\alpha)$, and the induced extension
\[\xymatrix{0\ar[r] & K_\ast(A^\alpha)\ar[r] &K_\ast(A)\ar[r] & K_\ast(SA^\alpha)\ar[r]& 0}\]
is pure; see \autoref{df:PureExt} and \autoref{thm:PureExt}.

In the following proposition, we will use the fact that if
$\alpha\colon \T\to\Aut(A)$ has the \Rp, then so does
its $n$-amplification $\alpha\otimes\id_{M_n}\colon \T \to\Aut(M_n(A))$ for every
$n\in\N$, and that $M_n(A)^{\alpha\otimes \id_{M_n}}=M_n(A^\alpha)$.

\begin{prop}\label{prop:InjKthy}
Let $A$ be a unital \ca\ and let $\alpha\colon \T\to\Aut(A)$ be an action with the \Rp, and let
$\iota\colon A^\alpha\hookrightarrow A$ denote the canonical inclusion. Then $K_0(\iota)(x)\leq K_0(\iota)(y)$ in $K_0(A)$ if and only if $x\leq y$ in $K_0(A^\alpha)$. In particular, $K_0(\iota)$
and $K_1(\iota)$ are injective.
\end{prop}
\begin{proof}
%
Let 
$x,y\in K_0(A^\alpha)$, and assume that 
$K_0(\iota)(x)\leq K_0(\iota)(y)$. Set 
$z=y-x$, so that $K_0(\iota)(z)\geq 0$ in $K_0(A)$. Find projections 
$p,q\in \bigcup_{m\in\N}M_m(A^\alpha)$ with $z=[p]-[q]$ in $K_0(A^\alpha)$, and let $e\in \bigcup_{m\in\N}M_m(A)$ satisfy $K_0(\iota)(z)=[e]$ in $K_0(A)$. Then $[p]=[e]+[q]$ in $K_0(A)$.
By increasing the matrix sizes, may assume that there 
exists $n\in\N$ such that:
\bi\item $p$ and $q$ belong to
$M_n(A^\alpha)$;
\item $e$ belongs to $M_n(A)$;
\item $e$ is orthogonal to $q$;
\item $p$ is Murray-von Neumann equivalent to $e+q$ in $M_n(A)$.\ei

Set $\alpha^{(n)}=\alpha\otimes\id_{M_n}$, which
has the Rokhlin property. 
Let $s\in M_n(A)$ be a partial isometry satisfying $s^*s=p$ and 
$ss^*=e+q$.
For $\ep=1/12$, let $\delta>0$ such that whenever $B$ is a \ca\ and $a\in B_{\mathrm{sa}}$ satisfies $\|a^2-a\|<\delta$, then there exists a projection $r\in B$
with $\|r-a\|<\ep$.
Set $\ep_0=\min\{\ep,\delta/5\}$, and 
let $\sigma\colon M_n(A)\to M_n(A^\alpha)$ be a unital completely 
positive map as in the conclusion of Theorem~2.11 in~\cite{Gar_CptRok}
for $\ep_0$, $F_1=\{p,q,e,s,s^*\}$
and $F_2=\{p,q\}$. Set $t=\sigma(s)$ and $f=\sigma(e)$. 
Then 
\[t^*t\approx_{\ep_0} \sigma(s^*s)=\sigma(p)\approx_{\ep_0} p,\]
so $\|t^*t-p\|<2\ep_0$. In particular,
$(1-q)f\approx_{2\ep_0} f$. Moreover, $f^*=f$ and 
\[f^2=\sigma(e)^2\approx_{2\ep_0}\sigma(e^2)=\sigma(e)=f,\]
thus $\|f^2-f\|<2\ep_0$.
On the other hand, 
\[qf=q\sigma(e)\approx_{\ep_0} \sigma(q)\sigma(e)\approx_{\ep_0} \sigma(qe)=0,\]
and hence $\|qf\|<2\ep_0$. Similarly, $\|fq\|<2\ep_0$.

Set $a=(1-q)f(1-q)$. Then $a=a^*$ and $\|a-f\|\leq 4\ep_0$. Moreover,
\[a^2=(1-q)f(1-q)f(1-q)\approx_{2\ep_0}(1-q)f^2(1-q)\approx_{2\ep_0}(1-q)f(1-q)=a,\]
so $\|a^2-a\|\leq 4\ep_0<\delta$. 
By the choice of 
$\delta$ applied to $B=(1-q)M_n(A^\alpha)(1-q)$, 
there exists a projection $r\in M_n(A^\alpha)$ with $\|r-a\|<\ep$
and $rq=0$. 

Set $x=(q+r)tp\in M_n(A^\alpha)$. Then 
\begin{align*}
x^*x&=pt^*(q+r)tp\approx_\ep pt^*(q+a)tp\\
&\approx_{4\ep_0}pt^*(q+f)tp \approx_{2\ep_0}pt^*tt^*tp\\
&\approx_{2\ep_0}p^4=p,
\end{align*}
so $\|x^*x-p\|\leq 8\ep_0+\ep<1$. Similarly, we have
\begin{align*}
xx^*&=(q+r)tpt^*(q+r)\approx_{2\ep_0} (q+r)tt^*tt^*(q+r)\\
&\approx_{4\ep_0}(q+r)(q+f)(q+f)(q+r) \approx_{4\ep_0+\ep}(q+r)^4=q+r,
\end{align*}
so $\|xx^*-(q+r)\|\leq 10\ep_0+\ep<1$. Since $x=(q+r)xp$, it follows 
from Lemma~2.5.3 in \cite{Lin_Book} 
that there exists a partial isometry $v\in M_n(A^\alpha)$ 
such that $v^*v=p$ and $ww^*=q+r$. 
It follows that 
$[p]=[q]+[r]$ in $K_0(A^\alpha)$, and thus $z=[p]-[q]$ is 
positive in $K_0(A^\alpha)$, as desired.

It follows that $K_0(\iota)$ is injective. The 
result for $K_1$ follows by replacing $A$ with its tensor product with any unital nuclear \ca\ $B$ 
with $B\sim_{KK}C_0(\mathbb{R})$,
endowed with the trivial action.
\end{proof}

Next, we recall the definition of a pure subgroup and a pure extension.

\begin{df}\label{df:PureExt}
Let $G$ be an abelian group and let $H$ be a subgroup. We say that $H$ is \emph{pure} if $nH=nG\cap H$ for all $n\in \N$. 
In other words, for every $h\in H$ and $n\in\N$, if there exists
$g\in G$ with $ng=h$ then there exists $h'\in H$ with $nh'=h$.

An extension $0\to H\to G\to Q\to 0$ is said to be \emph{pure} 
if $H$ is pure in $G$.\end{df}

The notion of a pure subgroup generalizes that of a direct summand. 
For example, the torsion subgroup of
any abelian group is always a pure subgroup, although it is not 
always a direct summand. If $G$ is finitely generated, then any 
pure subgroup is automatically a direct summand. On the other hand, 
there exist pure subgroups which are finitely generated, yet not a direct summand (despite being direct summands in every finitely
generated subgroup that contains them). 

In the following theorem, note that part (3) does not
follow from part~(2) since in (3) we only make assumptions
about \emph{one} of the $K$-groups of $A$, and not
both.

\begin{thm}\label{thm:PureExt} 
Let $\alpha\colon \T\to\Aut(A)$ be an action on a unital \ca\ $A$ with the \Rp. Denote by 
$\iota\colon A^\alpha\to A$ the canonical inclusion.
\be\item
There is a canonical class 
$\Ext_\ast(\alpha)=(\Ext_0(\alpha),\Ext_1(\alpha))$,
where $\Ext_j(\alpha)\in \Ext(K_j(SA^\alpha),K_j(A^\alpha))$ is the pure extension
\[\xymatrix{0\ar[r] & K_j(A^\alpha)\ar[r]^-{K_j(\iota)} &K_j(A)\ar[r] & K_j(SA^\alpha)\ar[r]& 0}.\]
\item If both $K_0(A)$ and $K_1(A)$ are (possibly infinite) direct sums 
of cyclic groups, then there are isomorphisms 
\[K_0(A)\cong K_1(A)\cong K_0(A^\alpha)\oplus K_1(A^\alpha)\] 
such that $[1_A]\in K_0(A)$ is sent to
$([1_{A^\alpha}],0)\in K_0(A^\alpha)\oplus K_1(A^\alpha)$.
\item If at least one of $K_0(A)$ or $K_1(A)$ is finitely generated, then 
there are isomorphisms as in~(2).
\ee
\end{thm}
\begin{proof} 
(1). 
Consider the Pimsner-Voiculescu exact sequence associated 
to $\check{\alpha}$:
\beqa \xymatrix{ K_0(A^\alpha)\ar[rr]^-{1-K_0(\check{\alpha})}&& K_0(A^\alpha)\ar[rr]^{K_0(\iota)} && K_0(A)\ar[d]\\
K_1(A)\ar[u]&& K_1(A^\alpha)\ar[ll]^-{K_1(\iota)}&& K_1(A^\alpha)\ar[ll]^-{1-K_1(\check{\alpha})}.}\eeqa
By \autoref{prop:InjKthy}, $K_\ast(\iota)$ is injective
and thus we obtain the extension
\[0\to K_\ast(A^\alpha)\to K_\ast(A) \to K_{\ast}(SA^\alpha)\to 0.\]
We claim that the extension is pure. By taking suspensions,
it suffices
to prove that $K_0(A^\alpha)$ is a pure subgroup of $K_0(A)$. Let 
$x\in K_0(A^\alpha)$, let $k\in\N$ and let $y\in K_0(A)$, and 
suppose that $ky=K_0(\iota)(x)$. 
Find projections $p_x,q_x\in \bigcup_{m\in\N} M_m(A^\alpha)$ and 
$p_y,q_y\in \bigcup_{m\in\N} M_m(A)$ such that $x=[p_x]-[q_x]$ 
and $y=[p_y]-[q_y]$. It follows that 
$k[p_y]+[q_x]=[p_x]+k[q_y]$ in $K_0(A)$.
Without loss of generality, we may assume that there is $n\in\N$
such
that 
\bi\item $p_x,q_x$ to
$M_n(A)^\alpha$ and $p_y,q_y$ belong to $M_n(A)$;
\item $p_x$ is orthogonal to $q_y$ and $p_y$ is orthogonal to $q_x$;
\item there exists $s\in M_{nk}(A)$ with  
\begin{align*} s^*s=\left(
               \begin{array}{cccc}
                 q_x+p_y &  &  &  \\
                  & p_y &  &  \\
                  & & \ddots &  \\
                  & &  & p_y \\
               \end{array}
             \right) \ \ \mbox{ and } \ \ ss^*=\left(
               \begin{array}{cccc}
                 p_x+q_y &  &  &  \\
                  & q_y &  &  \\
                  & & \ddots &  \\
                  & &  & q_y \\
               \end{array}
             \right).\end{align*}
\ei

Note that $\alpha^{(nk)}\colon \T\to\Aut(M_{nk}(A))$ has the 
\Rp.
For $\ep=1/12$, let $\delta>0$ such that whenever $B$ is a \ca\ and $a\in B$ is a self-adjoint
element satisfying $\|a^2-a\|<\delta$, then there exists a projection $r\in B$
with $\|r-a\|<\ep$.
Set $\ep_0=\min\{\ep,\delta/10\}$, and 
let $\sigma\colon M_{nk}(A)\to M_{nk}(A^\alpha)$ be a unital completely 
positive map as in the conclusion of Theorem~2.11 in~\cite{Gar_CptRok}
for $\ep_0$, $F_1=\{p_x,q_x,p_y,q_y,s,s^*\}$
and $F_2=\{p_x,q_x\}$. 
Then $\|\sigma(p_x)-p_x\|<\ep$ and $\|\sigma(q_x)-q_x\|<\ep$. 

Set $e_y=\sigma(p_y)$ and $f_y=\sigma(q_y)$, which are self-adjoint
contractions in $M_{nk}(A^\alpha)$ satisfying $\|e_y^2-e_y\|<2\ep_0$ 
and $\|f_y^2-f_y\|<2\ep_0.$ Set $t=\sigma(s)$.
Then 
\[t^*t\approx_{2\ep_0}\diag(q_x+e_y,e_y,\ldots,e_y) \ \mbox{ and } 
\ tt^*\approx_{2\ep_0}\diag(p_x+f_y,f_y,\ldots,f_y).
\]

Set $a=(1-p_x)f_y(1-p_x)$ and $b=(1-q_x)e_y(1-q_x)$, which are self-adjoint elements
in the corners of $M_{nk}(A^\alpha)$ by $1-p_x$ and $1-q_x$, respectively. 
Moreover, $\|a-f_y\|\leq 4\ep_0$ and $\|b-e_y\|\leq 4\ep_0$.
On the other hand, 
\begin{align*}a^2&=(1-p_x)f_y(1-p_x)f_y(1-p_x)\\
&\approx_{2\ep_0}(1-p_x)f_y^2(1-p_x)\\
&\approx_{2\ep_0}(1-p_x)f_y(1-p_x)=a,\end{align*}
so $\|a^2-a\|<4\ep_0<\delta$. Similarly, we have $\|b^2-b\|<\delta$. 
Using the definition of $\delta$ with $B=(1-p_x)M_{nk}(A^\alpha)(1-p_x)$, there
exits a projection $\widetilde{q}_y \in M_{nk}(A^\alpha)$ satisfying
$\|\widetilde{q}_y-a\|<\ep$ and $p_x\widetilde{q}_y=0$. Similarly,
there
exits a projection $\widetilde{p}_y \in M_{nk}(A^\alpha)$ satisfying
$\|\widetilde{p}_y-b\|<\ep$ and $\widetilde{p}_yq_x=0$.
Set 
\[r=\diag(p_x+\widetilde{q}_y,\widetilde{q}_y,\ldots,\widetilde{q}_y) t
\diag(q_x+\widetilde{p}_y,\widetilde{p}_y,\ldots,\widetilde{p}_y),\]
which belongs to $M_{nk}(A^\alpha)$. One checks
that $\|r^*r-\diag(q_x+\widetilde{p}_y,\widetilde{p}_y,\ldots,\widetilde{p}_y)\|<1$, 
and that 
$\|r^*r-\diag(p_x+\widetilde{q}_y,\widetilde{q}_y,\ldots,\widetilde{q}_y)\|<1$. By
Lemma~2.5.3 in \cite{Lin_Book}, there exists a partial isometry $w\in M_{nk}(A^\alpha)$ such that 
\[w^*w=\left(
               \begin{array}{cccc}
                 q_x+\widetilde{p}_y &  &  &  \\
                  & \widetilde{p}_y &  &  \\
                  & & \ddots &  \\
                  & &  & \widetilde{p}_y \\
               \end{array}
             \right) \ \mbox{ and } \ ww^*=
\left(
               \begin{array}{cccc}
                 p_y+\widetilde{q}_y &  &  &  \\
                  & \widetilde{q}_y &  &  \\
                  & & \ddots &  \\
                  & &  & \widetilde{q}_y \\
               \end{array}
             \right).\] 
It follows that 
$[p_x]+k[\widetilde{q}_y]=k[\widetilde{p}_y]+[p_y]$ in $K_0(A^\alpha)$. With
$z=[\widetilde{q}_x]-[\widetilde{q}_y]\in K_0(A^\alpha)$, we have $kz=x$, as 
desired. 

(2). We show that there is an isomorphism $K_0(A)\cong K_0(A^\alpha)\oplus K_1(A^\alpha)$; the argument for $K_1(A)$ is identical because the assumptions are symmetric. 
Abbreviate $K_j(A)$ to $K_j$, and $K_j(A^\alpha)$ to $K_j^\alpha$, for $j=0,1$.
It is a standard result in group theory, usually attributed to
Kulikov, that subgroups of direct sums of cyclic groups are again 
direct sums of cyclic groups; see Theorem~18.1 in~\cite{Fuc_direct_1970}.
Since $K_1$ is a direct sum of cyclic groups, we deduce that the same is true for $K_1^\alpha$. 

By part~(1) of this theorem, there is a canonical quotient map 
$\pi\colon K_0\to K_1^\alpha$ whose kernel is $K_0^\alpha$. Choose 
a presentation $K_1^\alpha\cong \bigoplus_{s\in S}C_s$, where each $C_s$ is a cyclic group with generator $x_s$. In particular, 
$\{x_s\colon s\in S\}$ generates $K_1^\alpha$.
Let $s\in S$. If $x_s$ has infinite order in $K_1^\alpha$, we let 
$y_s\in K_0$ be any group element (necessarily of infinite order) 
satisfying $\pi(y_s)=x_s$. If $x_s$ has order $n<\I$, let $z_s\in K_0$ be 
any lift of $x_s$, and note that $nz_s$ belongs to $K_0^\alpha$, which is 
a pure subgroup of $K_0$. Hence there exists $k_s\in K_0^\alpha$
with $nk_s=nz_s$, and we set $y_s=z_s-k_s$, which also lifts $x_s$. 

Let $L$ be the subgroup of $K_0$ generated by $\{y_s\colon s\in S\}$, which 
is mapped isomorphically onto $K_1^\alpha$ via $\pi$. In particular, 
$K_0^\alpha\cap L=\{0\}$ and $K_0^\alpha+L=K_0$. (Equivalently, $L$
defines a splitting for the quotient map $\pi$.) We deduce that the extension
$0\to K_0^\alpha \to K_0\to K_1^\alpha\to 0$ splits, and thus 
$K_0\cong K_0^\alpha\oplus K_1^\alpha$. The isomorphism can be clearly chosen
to send $[1_A]\in K_0$ to $[1_{A^\alpha}]\in K_0^\alpha$. 

(3).
Assume that $K_1(A)$ 
is finitely generated (and in particular a direct sum of cyclic groups). 
Hence $K_0(A^\alpha)$ and $K_1(A^\alpha)$ are both finitely generated, being a quotient and a subgroup of $K_1(A)$, respectively. 
The argument given in the proof of part~(2) above 
shows that there is an isomorphism 
$K_0(A)\cong K_0(A^\alpha)\oplus K_1(A^\alpha)$. Thus $K_0(A)$ is 
also finitely generated, and repeating the same argument again, exchanging 
the roles of $K_0$ and $K_1$, shows that 
$K_1(A)\cong K_0(A^\alpha)\oplus K_1(A^\alpha)$. 
\end{proof}

In reference to part~(2) of \autoref{thm:PureExt}, we mention that it is 
not in general true that a pure subgroup of a direct sum of cyclic groups 
is automatically a direct summand. For example, set $G=\bigoplus_{n\in\N} \Z_{2^n}$, with canonical generators $x_n\in\Z_{2^n}$ for $n\in\N$, and 
let $H$ be the subgroup generated by $\{x_n-2x_{n+1}\colon n\in\N\}$. Then 
$H$ is pure in $G$ but not a direct summand.

\section{Circle actions on Kirchberg algebras}
This section contains our main results concerning
$KK^\T$-theory for Rokhlin actions. This 
includes Theorems~\ref{thmintro:Classif}, 
\ref{thmintro:KKeqKir} and \ref{thmintro:Emb}
from the introduction. 
In the presence of the UCT, any isomorphism between
the pure extensions from \autoref{thm:PureExt}
lifts to a $KK^\T$-equivalence. 
We show by means of an 
example that an isomorphism of the $K$- and 
$K^\T$-theories
does not necessarily lift to a $KK^\T$-equivalence;
see \autoref{eg:EqKThyNotEnough}
Finally, we also describe the extensions that arise
as $\Ext_\ast(\alpha)$ for a Rokhlin action $\alpha$ on
a Kirchberg algebra satisfying the UCT; see 
\autoref{thm:rangeInv}.


\begin{df}
Let $A$ be a simple \uca. Then $A$ is said to be:
\be
\item \emph{purely infinite}, if for every $a\in A\setminus\{0\}$ there are $x,y\in A$ with $xay=1$.
\item a \emph{Kirchberg algebra}, if it is purely infinite, separable and nuclear.
\ee
\end{df}

Recall that an automorphism $\varphi$ of a \ca\ is said to be 
\emph{aperiodic} if $\varphi^n$ is not inner for all $n\geq 1$.

\begin{prop} \label{prop:KirAperiodic} 
Let $A$ be a \uca, and let $\alpha\colon \T\to\Aut(A)$ be an 
action with the \Rp.
\be\item $A$ is simple if and only if $A^\alpha$ is simple and
$\check{\alpha}$ is aperiodic.
\item $A$ is purely infinite simple if and only if $A^\alpha$ is purely
infinite simple and $\check{\alpha}$ is aperiodic.
\item $A$ is a Kirchberg algebra if and only if $A^\alpha$ is a 
Kirchberg algebra and $\check{\alpha}$ is aperiodic. 
\item $A$ satisfies the UCT if and only if $A^\alpha$ satisfies the UCT.
\ee \end{prop}
\begin{proof} 
(1). If $A$ is simple, then $A^\alpha$ is simple by \autoref{cor: FixedPtCP}.
We show that $\check{\alpha}$ is aperiodic. 
Arguing by contradiction, suppose that there exist $n\geq 1$ and a unitary $v\in A^\alpha$ such that $\check{\alpha}^n=\Ad(v)$.
Set
\[w=v\check{\alpha}(v)\cdots\check{\alpha}^{n-1}(v).\]
Then $\check{\alpha}^{n^2}=\Ad(w)$.
Using that $\check{\alpha}^n(v)=v$ at the second step, that $xv=\check{\alpha}^n(x)v$ for all $x\in A^\alpha$ at the third, and that $\check{\alpha}^{-n}(v)=v$ at he fifth, we get 
\begin{align*}
\check{\alpha}(w)&=
 \check{\alpha}(v)\check{\alpha}^2(v)\cdots\check{\alpha}^{n-1}(v) \check{\alpha}^{n}(v)= \check{\alpha}(v)\check{\alpha}^2(v)\cdots\check{\alpha}^{n-1}(v) v\\
&=v\check{\alpha}^{-n}\big(\check{\alpha}(v)\check{\alpha}^2(v)\cdots\check{\alpha}^{n-1}(v) \big)=
v\check{\alpha}\big(\check{\alpha}^{-n}(v)\big)\check{\alpha}^2\big(\check{\alpha}^{-n}(v)\big)\cdots\check{\alpha}^{n-1}(\check{\alpha}^{-n}(v)\big)\\
&=v\check{\alpha}(v)\cdots\check{\alpha}^{n-1}(v)=w.
\end{align*}
It follows that $w$ is $\check{\alpha}$-invariant. 
With $u$ denoting the canonical unitary in $A^\alpha\rtimes_{\check{\alpha}}\Z$ that 
implements $\check{\alpha}$,
we therefore have $uwu^*=w$.
Set $z=u^{n^2}w^*$. 
It is clear that $z$ commutes with $u$, and for $a\in A^\alpha$ we have
$$zaz^*=u^{n^2}w^*aw\big(u^{n^2}\big)^*=u^{n^2}\check{\alpha}^{-n^2}(a)\big(u^{n^2}\big)^*=a,$$
so $z$ belongs to the center of $A^\alpha\rtimes_{\check{\alpha}}\Z\cong A$. 
Since $A$ is simple, its center is trivial and thus there is 
$\lambda\in\C$ with
$u^{n^2}=\lambda w$. In particular, $u^{n^2}$ belongs to $A^\alpha$, which
is a contradiction. This shows that $\check{\alpha}$ is aperiodic.

Conversely, if $\check{\alpha}$ is aperiodic and $A^\alpha$ is simple, 
it follows from Theorem 3.1 in \cite{Kis_outer} that the crossed product 
$A^\alpha\rtimes_\varphi\Z\cong A$ is simple.

(2). Assume that $A$ is purely infinite simple. Then $\check{\alpha}$
is aperiodic by part~(1).
Let $a\in A^\alpha$ be nonzero and let $\varepsilon>0$ small enough
so that $\ep^3+3\ep<1$. 
Without loss of generality, we assume that $\|a\|=1$.
Find $x,y\in A$ such that $xay=1$. 
By Lemma 4.1.7 in \cite{Ror_BookClassif},
we may assume that $\|x\|<1+\varepsilon$ and $\|y\|<1+\varepsilon$. 
Let $\sigma\colon A\to A^\alpha$ a completely positive unital map as 
in the conclusion of Theorem~2.11 in~\cite{Gar_CptRok}
for $\ep>0$, 
$F_2=\{x,y,xa,a\}$ and $F_1=\{a\}$. 
Then
\begin{align*} \sigma(x)a\sigma(y)&\approx_{(1+\ep)^2\ep}\sigma(x)\sigma(a)\sigma(y)
 \approx_{\ep} \sigma(xa)\sigma(y)
 \approx_{\ep} \sigma(xay)=1.
\end{align*}
Hence $\|\sigma(x)a\sigma(y)-1\|\leq \ep^3+3\ep<1$. It follows that $\sigma(x)a\sigma(y)$ is invertible. 
With $z\in A^\alpha$ denoting its inverse, we
have $\sigma(x)a\sigma(y)z=1$, as desired. 

Conversely, assume that $\check{\alpha}$ is aperiodic and $A^\alpha$ is purely infinite simple. Then $A\cong A^\alpha\rtimes_{\check{\alpha}}\Z$
is purely infinite simple by Corollary~4.6 in~\cite{JeoOsa}.

(3). This follows from (1) and (2), since $A$ is nuclear (respectively, separable)
if and only if so is $A^\alpha$.

(4). If $A$ satisfies the UCT, then so does $A^\alpha$ by Theorem~3.13 in~\cite{Gar_CptRok}. The converse follows from the 
fact that the UCT is preserved by $\Z$-crossed products.
\end{proof}

\begin{rem}
Let the notation be as in \autoref{prop:KirAperiodic}.
\bi\item Simplicity of $A$ is really needed in~(1) to conclude that $\check{\alpha}$ is aperiodic, even if $A^\alpha$ is simple. 
Consider for example the 
trivial automorphism of $\C$, whose dual action is $\texttt{Lt}$.
\item Outerness of $\alpha$ is not enough in~(2) to 
deduce pure infiniteness of 
$A^\alpha$ from pure infiniteness of $A$ (unlike for finite groups).
For example, the fixed point algebra of $\OI$ by its gauge action is 
AF (and not even simple).
\ei
\end{rem}

We will need some terminology.

\begin{df}\label{df:UniKKEq}
Let $G$ be a second countable, 
locally compact group (in this work either $\T$, $\Z$, or
the trivial group), let 
$A$ and $B$ be separable, unital \ca s, and let 
$\alpha\colon G\to\Aut(A)$ and $\beta\colon G\to\Aut(B)$
be actions.
We say that $(A,\alpha)$ and $(B,\beta)$ are 
\emph{unitally $KK^G$-equivalent},
if there is an invertible class $\eta\in KK^G(A,B)$
with $[1_A]\times \eta=[1_B]$.
In this situation, we say that $\eta$ is a
\emph{unital $KK^G$-equivalence}.
\end{df} 

A unital $KK^\Z$-equivalence between two $\Z$-actions
$(A,\sigma)$ and $(B,\theta)$ naturally induces an isomorphism
between the Pimsner-Voiculescu 6-term exact sequences of $\sigma$
and $\theta$ which moreover preserves the classes of the units. 
If the automorphisms are moreover approximately inner,
then each of these 6-term exact sequences splits into two short 
exact sequences. The resulting equivalence relation for sums of
short exact sequences (with distinguished classes) is the 
following:

\begin{df}\label{df:IsomExt}
For $j=0,1$, 
let $K^\mathcal{E}_j, G^\mathcal{E}_j, K^\mathcal{F}_j$, and $G^\mathcal{F}_j$ be countable abelian groups, let $k^\mathcal{E}_0\in K^\mathcal{E}_0$
and $k^\mathcal{F}_0\in K^\mathcal{F}_0$, 
and let 
\[(\mathcal{E}_j) \ \ 0 \to K^\mathcal{E}_j \to G^\mathcal{E}_j \to K^\mathcal{E}_{1-j}\to 0 
\ \ \ \ \mbox{and } \ \ \ \ 
(\mathcal{F}_j) \ \ 0 \to K^\mathcal{F}_j \to G^\mathcal{F}_j \to K^\mathcal{F}_{1-j}\to 0
\]
be short exact sequences. We say that $(\mathcal{E}_0,\mathcal{E}_1,k_0^\mathcal{E})$ is \emph{isomorphic} to $(\mathcal{F}_0,\mathcal{F}_1,k_0^\mathcal{F})$
if there exist group isomorphisms
$\varphi_j \colon K_j^\mathcal{E}\to K_j^{\mathcal{F}}$ and 
$\psi_j \colon G_j^\mathcal{E}\to G_j^\mathcal{F}$ with
$\varphi_0(k_0^\mathcal{E})=k_0^\mathcal{F}$
making 
the following diagram commute:
\begin{align*}\label{eqn:Commut}\tag{4.1}
\xymatrix{K_j^\mathcal{E} \ar[rr] \ar[d]^{\psi_j} && G_j^\mathcal{E}\ar[d]^{\varphi_j} \ar[rr] && K_{1-j}^\mathcal{E}\ar[d]^-{\psi_{1-j}}\\
K_j^\mathcal{F} \ar[rr]&& G_j^\mathcal{F}\ar[rr] && K_{1-j}^\mathcal{F}.}\end{align*}
\end{df}

This is different from having two isomorphisms $(\mathcal{E}_0,k_0^\mathcal{E})\cong(\mathcal{F}_0,k_0^\mathcal{F})$ and $\mathcal{E}_1\cong \mathcal{F}_1$, since, for example, we require the isomorphism $K_0^\mathcal{E}\cong K_0^\mathcal{F}$ to be the same both in the isomorphism
$\mathcal{E}_0\cong \mathcal{F}_0$ and in $\mathcal{E}_1\cong\mathcal{F}_1$.

We are now ready to 
prove that Rokhlin actions of the circle
on unital Kirchberg algebras are conjugate if they are 
unitally $KK^\T$-equivalent.

\begin{thm}\label{thm:ClassRpKir} 
Let $A$ and $B$ be unital Kirchberg algebras, and let
$\alpha\colon \T\to\Aut(A)$ and $\beta\colon \T\to\Aut(B)$ 
be actions
with the \Rp. 
Then $(A,\alpha)$ and $(B,\beta)$ are conjugate
if and only if they are unitally $KK^\T$-equivalent.

When $A$ and $B$ satisfy the UCT, these conditions 
are equivalent to the existence of an 
isomorphism $(\Ext_\ast(\alpha),[1_{A^\alpha}])\cong
(\Ext_\ast(\beta),[1_{B^\beta}])$ in the sense of \autoref{df:IsomExt}.
\end{thm}
\begin{proof}
Assume that $\alpha$ and $\beta$ are unitally
$KK^\T$-equivalent, and fix a unital 
$KK^\T$-equivalence
$\xi\in KK^\T\big((A,\alpha),(B,\beta)\big)$.
Denote by $\xi\rtimes\T$ the $KK^\Z$-equivalence
between $(A\rtimes_\alpha\T,\widehat{\alpha})$ 
and $(B\rtimes_\beta\T,\widehat{\beta})$ that 
$\xi$ induces. Combining Takai duality with 
\autoref{thm:RpisDual}, it follows that $\xi\rtimes\T$ induces a 
$KK^\Z$-equivalence $\eta$ 
between $(A^\alpha,\check{\alpha})$ 
and $(B^\beta,\check{\beta})$. Since $\xi$ is 
unital, we can choose $\eta$ to be unital as well. 

Since $A^\alpha$ and $B^\beta$
are Kirchberg algebras by part~(3) of 
\autoref{prop:KirAperiodic}, it follows from 
Theorem~4.2.1 in~\cite{Phi_Classif} that there 
exists an isomorphism $\phi\colon A^\alpha\to B^\beta$ such that $KK(\phi)=\eta$. Since $\eta$ is equivariant,
it follows that 
$\phi\circ \check{\alpha}\circ\phi^{-1}$ and $\check{\beta}$ determine the same
class in $KK(B^\beta,B^\beta)$. Since $A$ and $B$ are simple, it follows from part~(1) of \autoref{prop:KirAperiodic} that $\check{\alpha}$ and $\check{\beta}$ are aperiodic. Thus, by Theorem~5 
in~\cite{Nak_aperiodic_2000} there exists a unitary
$w\in \U(B^\beta)$ with $\phi\circ \check{\alpha}\circ\phi^{-1}=\Ad(w)\circ\check{\beta}$. In other words,
$\check{\alpha}$ and $\check{\beta}$ are cocycle 
conjugate. It follows that their dual actions are 
conjugate, and hence $(A,\alpha)\cong (B,\beta)$ as 
desired.


We turn to the last part of the statement.
Fix a unital $KK^\T$-equivalence $\rho\in KK^\T((A,\alpha),(B,\beta))$.
Arguing as in the first part using Baaj-Skandalis duality, 
we obtain a unital $KK^\Z$-equi\-va\-lence
\[\eta\in KK^\Z\big((A^\alpha,\check{\alpha}),(B^\beta,\check{\beta})\big).\]
Denote by $\mathcal{F}(\eta)\in KK(A^\alpha,B^\beta)$ the
$KK$-equivalence that $\kappa$ induces under the forgetful functor 
$KK^\Z\to KK$. 
Similarly, we let $\mathcal{F}(\eta\rtimes\Z)\in KK(A,A)$
denote the $KK$-equivalence that $\kappa\rtimes\Z$ induces, which
can be canonically identified with $\mathcal{F}(\rho)$. 
Using naturality of the functors involved, 
we deduce that the diagram
\begin{align*}
\xymatrix{K_j(A^\alpha) \ar[rr] \ar[d]^{\mathcal{F}(\eta)_j} && K_j(A)\ar[d]^{\mathcal{F}(\rho)_j} \ar[rr] && K_{1-j}(A^\alpha)\ar[d]^-{\mathcal{F}(\eta)_{1-j}}\\
K_j(B^\beta) \ar[rr]&& K_j(B)\ar[rr] && K_{1-j}(B^\beta),}\end{align*}
commutes. Since all vertical maps are isomorphisms and 
$\mathcal{F}(\eta)_0([1_{A^\alpha}])=[1_{B^\beta}]$,
we deduce that $(\Ext_\ast(\alpha),[1_{A^\alpha}])\cong
(\Ext_\ast(\beta),[1_{B^\beta}])$.

We now prove the converse, so assume that $A$ and $B$ satisfy the UCT. 
By 
part~(4) of~\autoref{prop:KirAperiodic},
$A^\alpha$ and $B^\beta$ also satisfy the UCT.
Since $\check{\alpha}$ and $\check{\beta}$ are
approximately inner, an isomorphism $(\Ext_\ast(\alpha),[1_{A^\alpha}])\cong
(\Ext_\ast(\beta),[1_{B^\beta}])$
is equivalent to $\check{\alpha}$ and $\check{\beta}$ having isomorphic
Pimsner-Voiculescu 6-term exact sequences (in a unit-preserving way); 
see the comments before \autoref{df:IsomExt}.
It thus follows from Theorem~2.12 
in~\cite{Mey_more_2020} that $(A^\alpha,\check{\alpha})$
is unitally $KK^\Z$-equivalent to $(B^\beta,\check{\beta})$. Again by 
Baaj-Skandalis duality, it 
follows that $(A,\alpha)$ is unitally $KK^\T$-equivalent
to $(B,\beta)$. This finishes the proof.
\end{proof}

In the context of \autoref{thm:ClassRpKir}, 
the assumption that the diagram (\ref{eqn:Commut}) from
\autoref{df:IsomExt} commutes cannot be dropped, and it is not enough
to have isomorphisms $K_\ast(A^\alpha)\cong K_\ast(B^\beta)$
and $K_\ast(A)\cong K_\ast(B)$.
In the next example, we construct 
a Kirchberg algebra $A$ satisfying the UCT, and two actions
$\alpha,\gamma\colon \T\to \Aut(A)$ with the Rokhlin property, 
such that $A^\alpha\cong A^\gamma$ but $\alpha$ and $\gamma$ are not 
conjugate. In particular, the example shows that an isomorphism 
of the $K^\T$-theory cannot in general be lifted to a $KK^\T$-equivalence.

In preparation for our construction, we introduce some notation.
If $B$ is a unital \ca\ and $\varphi\in\Aut(B)$ is an approximately
inner automorphism, then 
the Pimsner-Voiculescu exact sequence for $\varphi$ reduces to the
short exact sequences
\[\label{eqn:PVsplit}\tag{4.2}\xymatrix{0\ar[r] & K_j(B)\ar[r]^-{K_j(\iota)} & K_j(B\rtimes_\varphi \Z)\ar[r] & K_{1-j}(B)\ar[r]& 0},\]
for $j=0,1$. 
We denote by $\eta_j(\varphi)
\in \Ext(K_{1-j}(B),K_j(B))$ 
the class of the above extension. Note that if $\varphi,\psi\in\Aut(B)$ are 
cocycle conjugate automorphisms, then $\eta_j(\varphi)=\eta_j(\psi)$.

\begin{eg}\label{eg:EqKThyNotEnough}
Set 
\[K=\Q\oplus\bigoplus_{n=1}^\I ((\Z\rtimes_{-1}\Z)\oplus \Z)
\ \ \mbox{ and } \ \ E=\Q\oplus K,\]
and note that $K$ and $E$ are torsion free. 
Set $k_0=(1_{\Q},0,0,\ldots)\in K$
and $e_0=(1_{\Q},k_0)\in E$. 
Using that $\Z\rtimes_{-1}\Z$ is a non-trivial extension of
$\Z$ by $\Z$, fix a non-trivial extension
\[\label{eqn:Ext}\tag{4.3} 0\to K \to E\to K\to 0,\]
and write $\xi\in \Ext(K,K)$ for induced class. 
Since $K$ is torsion free, it follows that the extension in
(\ref{eqn:Ext}) is pure. 
Note that there is also an isomorphism
$E\cong K\oplus K$. (In other words, and this is a crucial
ingredient in the construction, 
the group $E$ can be written in two 
non-equivalent ways as an extension of $K$ by itself.)

Since $K$ is torsion-free,
we may use Elliott's classification of simple 
A$\T$-algebras with real rank zero (see 
the comments before Proposition 3.2.7 in
\cite{Ror_BookClassif}),
to find a simple, unital A$\T$-algebra $C$ satisfying
$K_0(C)\cong K_1(C)\cong K$ with $[1_C]$ corresponding to $k_0\in K$.
Use the case $i=1$ of Theorem~3.1 in~\cite{KisKum_class_1998} 
to find an approximately inner automorphism $\varphi\in \Aut(C)$ 
with $\eta_0(\varphi)=0$ and $\eta_1(\varphi)=\xi\in \Ext(K,K)$.
Then
\[K_0(C\rtimes_\varphi\Z)\cong E \cong 
 K_1(C\rtimes_\varphi\Z).
\]
The proof of Theorem~3.1 in~\cite{KisKum_class_1998} is in fact 
constructive, and the argument used to prove the 
case $i=1$ shows that $\varphi$ can
be chosen to be approximately representable. Indeed, it is shown
in Subsection~3.11 of \cite{KisKum_class_1998} that
there is an increasing sequence $(C_n)_{n\in\N}$ of unital subalgebras of 
$C$ with $C=\varinjlim C_n$, and unitaries
$u_n\in C_n$ satisfying $\varphi=\varinjlim \Ad(u_n)$.
\vspace{0.2cm}

\textbf{Claim:} \emph{$\varphi$ is aperiodic.}
Arguing by contradiction, suppose that there exist
$n\geq 1$ and $u\in\U(C)$ 
such that $\varphi^n=\Ad(u)$. 
Set $v=u\varphi(u)\cdots \varphi^{n-1}(u)\in C$. 
Then $\varphi^{n^2}=\Ad(v)$ 
and $v$ is $\varphi^{n^2}$-invariant.
Moreover, we have
\begin{align*}
\varphi(v)&=
 \varphi(u)\varphi^2(u)\cdots\varphi^{n-1}(u) \varphi^{n}(u)= \varphi(u)\varphi^2(u)\cdots\varphi^{n-1}(u) u\\
&=u\varphi^{-n}\big(\varphi(u)\varphi^2(u)\cdots\varphi^{n-1}(u) \big)=
u\varphi\big(\varphi^{-n}(u)\big)\varphi^2\big(\varphi^{-n}(u)\big)\cdots\varphi^{n-1}(\varphi^{-n}(u)\big)\\
&=u\varphi(u)\cdots\varphi^{n-1}(u)=v,
\end{align*}
and thus $v$ is $\varphi$-invariant. 
Denote by $D$ the twisted crossed product of $C$ by $\varphi$
with respect to the twist induced by $v$.
By Theorem~2.4 in~\cite{OlePed_partially_1986},
the crossed product $C\rtimes_\varphi\Z$ is isomorphic to
the induced algebra $\mathrm{Ind}_{(n^2\Z)^{\perp}}^\T (D)$. 
By compactness of $\T$, this induced algebra is isomorphic
to $C(\T,D)$. In particular, 
\[\label{eqn:KgroupsD}\tag{4.4}
E\cong K_0(C\rtimes_\varphi\Z)\cong 
K_1(C\rtimes_\varphi\Z)\cong K_0(D)\oplus K_1(D).\] 

We proceed to compute the $K$-theory of $D$. 
Set $\K=\K(\ell^2(\Z))$.
By Theorem~3.4 in~\cite{PacRae_twisted_1989}, 
there is an automorphism $\varphi_0$ of 
$C\otimes \K$ whose crossed product is isomorphic to $D$ (this is the so-called
Packer-Raeburn stabilisation trick). An explicit formula
for $\varphi_0$ is given at the beginning of the proof 
on page 301 (see Equation (3.1)), which shows that we 
can choose $\varphi_0$ to be unitarily equivalent to 
$\varphi\otimes\id_{\K}$ (see also the top line on
page 302). It follows that $D$ is isomorphic to 
$(C\otimes\K)\rtimes_{\varphi\otimes\id_\K}\Z\cong 
(C\rtimes_\varphi\Z)\otimes\K$, and thus has the 
same $K$-groups as $C\rtimes_\varphi\Z$. That is,
$K_0(D)\cong K_1(D)\cong E$.

Combining the above with (\ref{eqn:KgroupsD}), 
we deduce that
$E\cong E\oplus E$. This is, however, not the case: the largest 
divisible subgroup of $E$ is $\Q^2$, while the largest divisible subgroup
of $E\oplus E$ is $\Q^4$. This contradiction shows that $\varphi$ is 
aperiodic, proving the claim.
\vspace{0.2cm}

Set $B=C\rtimes_\varphi\Z$ and $\beta=\widehat{\varphi}\colon \T\to\Aut(B)$.
Then $\beta$ has the Rokhlin property by \autoref{thm:DualityRpAr}, since 
$\varphi$ is approximately representable. Moreover, $B$
is unital, separable, nuclear, satisfies the UCT, 
and is simple since $\varphi$ is aperiodic. 
Set $A=B\otimes\OI$ and $\alpha=\beta\otimes\id_{\OI}$. Then
$A$ is a unital Kirchberg algebra satisfying the UCT, and $\alpha$
has the Rokhlin property. Note that 
\[A^\alpha=(C\rtimes_\varphi\Z)^\beta\otimes\OI=C\otimes\OI,\]
and thus $A^\alpha$ 
is the unique unital Kirchberg algebra satisfying the
UCT with $K$-theory given by $K_0(A^\alpha)\cong K_1(A^\alpha)\cong K$, with unit class $k_0$.

We now wish to realize $A$ in a different way as a crossed product
by an approximately representable automorphism, in such a way such that 
the associated Pimsner-Voiculescu exact sequence splits into two
copies of the trivial extension of $K$ by itself. 

Let $\psi_0\in\Aut(\OI)$ be an aperiodic, approximately representable
automorphism of $\OI$ (see, for example, Proposition~3.3 in~\cite{Gar_Kir2}).
Set $\psi=\id_{A^\alpha}\otimes \psi_0$, which we identify with
an aperiodic, approximately representable automorphism of $A^\alpha$.
Since $K_1(\OI)=0$, both classes $\eta_0(\psi_0)$ and $\eta_1(\psi_0)$
are trivial. It follows that the same is true for 
$\eta_0(\psi)=\eta_1(\psi)=0$ in $\Ext(K,K)$. 
The crossed product $A^\alpha\rtimes_\psi\Z$ is therefore a unital 
Kirchberg algebra satisfying the UCT with both $K$-groups isomorphic to 
$E$, and unit class $e_0$. It follows from the classification
of Kirchberg algebras 
(specifically Theorem~4.2.4 in~\cite{Phi_Classif})
that there is an isomorphism $A\cong A^\alpha\rtimes_\psi\Z$. Denote by
$\gamma\colon \T\to\Aut(A)$ the action that $\widehat{\psi}$ induces
on $A$ via this identification. Then $\gamma$ has the Rokhlin property 
because $\psi$ is approximately representable. 
Moreover, 
\[A^\gamma\cong (A^\alpha\rtimes_\psi\Z)^{\widehat{\psi}}=A^\alpha.\]
It follows that $K_\ast^\T(A,\alpha)\cong K^\T_\ast(A,\gamma)$ as groups 
(in fact, $\alpha$ and $\gamma$ have isomorphic
crossed products and fixed point algebras). 
This group isomorphism is automatically an 
isomorphism of $R(\T)$-modules, since the 
action of $R(\T)\cong \Z[x,x^{-1}]$ on 
$K_\ast^\T$
is determined by the dual automorphism, which for 
Rokhlin actions is approximately inner (by 
\autoref{thm:DualityRpAr}) and hence trivial on 
$K$-theory.

We denote by $\iota^\alpha\colon A^\alpha
\to A$ and $\iota^\gamma\colon A^\alpha\to A$ the induced inclusions of
$A^\alpha$ into $A$ as the $\alpha$- and $\gamma$-fixed point algebras, respectively. 
\vspace{0.2cm}

\textbf{Claim:} \emph{there is no 
$KK^\T$-equivalence (unital or otherwise) 
between $(A,\alpha)$ and $(A,\gamma)$.}
Arguing by contradiction, let us assume that there
exists a $KK^\T$-equivalence $\rho\in KK^\T((A,\alpha),(A,\gamma))$.
Using the Baaj-Skandalis duality, we get a $KK^\Z$-equi\-va\-lence
\[\rho\rtimes\T \in KK^\Z\big((A\rtimes_\alpha\T,\widehat{\alpha}),
 (A\rtimes_\gamma\T,\widehat{\gamma})\big).
\]
Since $(A\rtimes_\alpha\T,\widehat{\alpha})\sim_{KK^\Z} (A^\alpha,\check{\alpha})$ and $(A\rtimes_\alpha\T,\widehat{\gamma})\sim_{KK^\Z}
(A^\alpha,\check{\gamma})$ by Takai duality, 
we identify $\rho\rtimes\T$ with a 
$KK^\Z$-equivalence
\[\kappa\in KK^\Z\big((A^\alpha,\check{\alpha}),(A^\alpha,\check{\gamma})\big).\]
Denote by $\mathcal{F}(\kappa)\in KK(A^\alpha,A^\alpha)$ the
$KK$-equivalence that $\kappa$ induces under the forgetful functor 
$KK^\Z\to KK$. Similarly, we let $\mathcal{F}(\kappa\rtimes\Z)\in KK(A,A)$
denote the $KK$-equivalence that $\kappa\rtimes\Z$ induces, which
can be canonically identified with $\mathcal{F}(\rho)$. 
By naturality of all the functors
involved, there is a commutative diagram
\begin{align*}\label{eqn:EquivPV}\tag{4.5}
\xymatrix{0\ar[r] & K_1(A^\alpha)\ar[r]^{\iota^\alpha_\ast} \ar[d]^-{\kappa_\ast} &
K_1(A)\ar[r]\ar[d]^-{(\kappa\rtimes\Z)_\ast} & K_0(A^\alpha)\ar[r] \ar[d]^-{\kappa_\ast} &0\\
0\ar[r] & K_1(A^\alpha)\ar[r]_-{\iota^\gamma_\ast} &
K_1(A)\ar[r] & K_0(A^\alpha)\ar[r]&0,}
\end{align*}
where the vertical maps are all group isomorphisms.
Note that the horizontal short exact sequences are the Pimsner-Voiculescu
sequences associated to $\check{\alpha}$ and $\check{\gamma}$, 
respectively, as in (\ref{eqn:PVsplit}), and thus
their $\mathrm{Ext}$-classes are
$\eta_1(\check{\alpha})$ and
$\eta_1(\check{\gamma})$, respectively. 
Moreover, $\check{\alpha}=\varphi\otimes\id_{\OI}$ and 
under the $KK$-equivalence $C\sim_{KK}C\otimes \OI$
induced by $\C\sim_{KK}\OI$,
the class $\eta_1(\check{\alpha})$ corresponds to $\xi$. 
Similarly, $\check{\gamma}$ is conjugate to 
$\id_{A^\alpha}\otimes\psi_0\in\Aut(A^\alpha\otimes\OI)$, 
which under the $KK$-equivalence $A^\alpha\sim_{KK}A^\alpha\otimes\OI$
induced by $\C\sim_{KK}\OI$, 
the class $\eta_1(\check{\gamma})$ corresponds to $\eta_1(\id_{A^\alpha})=0$.

Commutativity of (\ref{eqn:EquivPV}) implies that $\xi=0$, which is a contradiction.
We conclude 
that $\alpha$ and $\gamma$ are not $KK^\T$-equivalent. In particular, 
$\alpha$ and $\gamma$ are not conjugate.
\end{eg}

The above example shows an interesting phenomenon, worth putting into 
perspective. 
In Example~10.6 
in~\cite{RosSch_kunneth_1986}, the authors construct two circle actions
on commutative \ca s with isomorphic $K^\T$-theory, which are
not $KK^\T$-equivalent. In their example, the 
underlying algebras are not even $KK$-equivalent, so the actions
cannot be $KK^\T$-equivalent. 
As informed to us by Schochet,
there was until now no known example of 
two circle actions on 
the same \ca, with
isomorphic fixed point algebras and 
crossed products, all satisfying the UCT,
and with isomorphic $K^\T$-theory, 
that are not $KK^\T$-equivalent. Our 
construction provides such an example. 

\autoref{thm:ClassRpKir} states that, for Rokhlin
actions on UCT Kirchberg algebras, the class 
Ext($\alpha)$ defined in part~(1) of 
\autoref{thm:PureExt} is a complete invariant up to 
conjugacy. It is then natural to ask for a range result, namely, which Ext-classes arise from 
Rokhlin actions on UCT Kirchberg algebras. 
In the next result, we show that the only possible
obstructions are the ones we obtained in 
\autoref{thm:PureExt}, specifically that the extension
is pure. 

\begin{thm}\label{thm:rangeInv}
Let $K_0$ and $K_1$ be abelian groups, let $k_0\in K_0$,
and let $\mathcal{E}_0\in \Ext(K_0,K_1)$ and $\mathcal{E}_1\in \Ext(K_1,K_0)$
be extensions.
The following are equivalent:
\be\item[(a)] There is a Rokhlin action $\alpha\colon \T\to\Aut(A)$ 
on a unital UCT Kirchberg algebra
$A$ with $(\Ext_\ast(\alpha),[1_{A^\alpha}])\cong (\mathcal{E}_0,\mathcal{E}_1,k_0)$ in the sense of \autoref{df:IsomExt};
\item[(b)] $\mathcal{E}_0$ and $\mathcal{E}_1$ are pure. 
\ee
\end{thm}
\begin{proof}
That (a) implies (b) follows from part~(1) of \autoref{thm:PureExt}.
Assume that (b) holds, and write $\mathcal{E}_0$ and $\mathcal{E}_1$
explicitly as
\[(\mathcal{E}_0) \ \ \xymatrix@C=1.5em{ 0 \ar[r] & K_0 \ar[r]^-{\iota_0}& G_0 \ar[r]^-{\pi_0} &K_1\ar[r]& 0} 
\ \ \ \ \mbox{and } \ \ \ \ 
(\mathcal{E}_1) \ \ \xymatrix@C=1.5em{ 0 \ar[r] & K_1 \ar[r]^-{\iota_1}& G_1 \ar[r]^-{\pi_1} &K_0\ar[r]& 0.}
\]
For $j=0,1$, let $(K_j^{(n)})_{n\in\N}$ be an increasing sequence
of finitely generated subgroups of $K_j$ whose union equals $K_j$.
Without loss of generality, we assume that $k_0$ belongs to $K_0^{(n)}$
for all $n\in\N$. Fix $j=0,1$. Since $\mathcal{E}_j$ is 
pure, for every $n\in\N$ there exists a (necessarily finitely generated)
subgroup $\widetilde{K}_{1-j}^{(n)}$ of $G_j$ such that $\pi_j$
restricts to an isomorphism $\widetilde{K}_{1-j}^{(n)}\cong K_{1-j}^{(n)}$.
We can choose $\widetilde{K}_{1-j}^{(n)}$ so that 
$\widetilde{K}_{1-j}^{(n)}\subseteq \widetilde{K}_{1-j}^{(n+1)}$. 

Let $G_j^{(n)}$ be the subgroup of $G_j$ generated by $\iota_j(K_j^{(n)})$
and $\widetilde{K}_{1-j}^{(n)}$. We denote by $\mathcal{E}_j^{(n)}$ the 
restricted short exact sequence
\[(\mathcal{E}_j^{(n)}) \ \ \ \ \ \ \ \ \xymatrix{ 0 \ar[r] & K_j^{(n)} \ar[r]& G_j^{(n)} \ar[r] &K_{1-j}^{(n)}\ar[r]& 0},\] 
where the maps are the restrictions of the ones for $\mathcal{E}_j$. 
Observe that there are canonical inclusions 
$f_j^{(n)}\colon K_j^{(n)}\to K_j^{(n+1)}$ and 
$h_j^{(n)}\colon G_j^{(n)}\to G_j^{(n+1)}$, making the following diagram
commute:
\begin{align*}\label{eqn:InclExts}\tag{4.6}
\xymatrix{ 0 \ar[r] & K_j^{(n)}\ar[d]_-{f^{(n)}_j} \ar[r]& G_j^{(n)}\ar[d]_-{h^{(n)}_j} \ar[r] &K_{1-j}^{(n)}\ar[d]_-{f^{(n)}_{1-j}}\ar[r]& 0\\
0 \ar[r] & K_j^{(n+1)} \ar[r]& G_j^{(n+1)} \ar[r] &K_{1-j}^{(n+1)}\ar[r]& 0.}
\end{align*}

Fix $n\in\N$. 
Since $\mathcal{E}_j$ is pure and $G^{(n)}_j$ is finitely generated, 
it follows that $\mathcal{E}^{(n)}_j$ is isomorphic to the trivial extension.
Thus there are isomorphisms
\[G_0^{(n)}\cong K_0^{(n)}\oplus K_1^{(n)}\cong G_1^{(n)}.\]
Let $\widetilde{B}^{(n)}$ be a UCT Kirchberg algebra satisfying
\[(K_0(\widetilde{B}^{(n)}),K_1(\widetilde{B}^{(n)}),[1_B])=(K_0^{(n)}, K_1^{(n)}, k_0),\] and use Theorem~4.1.1 in \cite{Phi_Classif}
to find a unital homomorphism $\widetilde{\theta}_n\colon \widetilde{B}_n
\to \widetilde{B}_{(n+1)}$ inducing 
\[f^{(n)}_0\oplus f^{(n)}_1\colon K_0(\widetilde{B}_n)\oplus K_1(\widetilde{B}_n)
 \to K_0(\widetilde{B}_{(n+1)})\oplus K_1(\widetilde{B}_{(n+1)})
\]
at the level of $K$-theory. 
Fix an aperiodic approximately representable automorphism $\Phi$ of $\OI$.
Set $B_n=\widetilde{B}_n\otimes \OI$, $\theta_n=\widetilde{\theta}_n\otimes\id_{\OI}$, 
and $\varphi_n=\id_{\widetilde{B}_n}\otimes \Phi\in\Aut(B_n)$.
Then $\varphi_n$ is approximately representable and aperiodic. Set 
$A_n=B_n\rtimes_{\varphi_n}\Z$ and let $\alpha^{(n)}\colon \T\to\Aut(A_n)$
denote the dual action of $\varphi_n$. Then $A_n$ is a Kirchberg algebra
satisfying the UCT by part~(4) of \autoref{prop:KirAperiodic}, and 
$\alpha^{(n)}$ has the Rokhlin property by \autoref{thm:DualityRpAr}. 
Let $\rho_n\colon (A_n,\alpha^{(n)})\to (A_{n+1},\alpha^{(n+1)})$
be the unital equivariant homomorphism induced by $\theta_n$. 
Since the $K$-theory of $A_n$ is finitely generated, it follows from
part~(3) of \autoref{thm:PureExt} that $\Ext_\ast(\alpha^{(n)})$ is
isomorphic to the trivial extension, and hence
$(\Ext_\ast(\alpha^{(n)}),k_0)\cong (\mathcal{E}_0^{(n)},\mathcal{E}_1^{(n)},k_0)$.

Finally, we set $(A,\alpha)=\varinjlim(A_n,\rho_n,\alpha^{(n)})$.
Then $A$ is a Kirchberg algebra satisfying the UCT, $\alpha$ has the 
Rokhlin property by part~(4) of Theorem~2.5 in~\cite{Gar_CptRok}.
Since $K_j(\rho_n)$ is identified with $h_j^{(n)}\colon G_j^{(n)}\to 
G_j^{(n+1)}$, it follows that $\rho_n$ induces the embedding
$\mathcal{E}_j^{(n)}\hookrightarrow \mathcal{E}_j^{(n+1)}$ from
diagram (\ref{eqn:InclExts}). Since the limit of such diagrams is $\mathcal{E}_j$, we
conclude that 
$(\Ext_\ast(\alpha),[1_{A^\alpha}])=(\mathcal{E}_0,\mathcal{E}_1,k_0)$,
as desired.\end{proof}

The following consequence of \autoref{thm:ClassRpKir} and 
\autoref{thm:rangeInv}
will be needed later.

\begin{cor}\label{cor:UniqueActionOt}
Up to conjugacy there exists a unique circle 
action on $\Ot$ with the Rokhlin property.
\end{cor}

We turn to Theorems~\ref{thmintro:KKeqKir} and 
\ref{thmintro:Emb} of the introduction,
relating Rokhlin actions on arbitrary \ca s with
Rokhlin actions on Kirchberg algebras.
We will need to consider a specific 
automorphism of $\Ot$; its explicit description
will be crucial in the proof of \autoref{thm:EveryRpKKequivKirch}. 

\begin{nota}\label{nota:Psi}
For every $n\geq 1$, let $u_n\in M_n\cong \B(\ell^2(\{0,\ldots,n-1\}))$ be the
unitary given by $u_n(\delta_j)=\delta_{j+1}$ for 
$j=0,\ldots,n-1$, where the subscripts are taken modulo $n$. 
(In particular, $u_1=1\in \C$.)
Let 
$\rho_n\colon M_n\to \Ot$ be a unital
homomorphism. Fix an isomorphism
$\kappa\colon \bigotimes_{n=1}^\I\Ot\to \Ot$, and 
let $\Psi\in\Aut(\Ot)$ be the automorphism
satisfying $\Psi\circ \kappa=\kappa\circ \bigotimes_{n=1}^\I \Ad(\rho_n(u_n))$.
One can check with elementary methods that $\Psi$ is aperiodic.
\end{nota}

\begin{prop}\label{thm:EmbedExactAutom}
Let $B$ be a unital, separable, exact \ca, and let
$\varphi\in\Aut(B)$ be an approximately inner automorphism. Then
there exist a unitary $v\in\Ot$
and a unital, equivariant embedding 
$(B,\varphi)\to (\Ot,\Ad(v)\circ \Psi)$.

Moreover, the unitary $v$ can be chosen so that
there is a unital, equivariant embedding 
$(\Ot,\id_\Ot)\to (\Ot,\Ad(v)\circ \Psi)$.
\end{prop}
\begin{proof} 
Fix a unital embedding $\theta\colon B\to \Ot$. 
Denote by $\pi\colon \ell^\I(\Ot)\to (\Ot)_\I$ the 
canonical quotient map.
Let $(u_n)_{n\in\N}$ be any sequence of unitaries in $B$
satisfying $\varphi(b)=\lim\limits_{n\to\I} \Ad(u_n)(b)$ for all $b\in B$, and set 
$z=\pi\big((\theta(u_n))_{n\in\N}\big)\in (\Ot)_\I$.
Then the unitary $z$ 
satisfies $\varphi(b)=z\theta(b)z^*$ for all $b\in B$.
By the universal property of the crossed product, there exists
a unital embedding 
$\Theta \colon B\rtimes_{\varphi}\Z\to (\Ot)_\I$.

We will show that $\Theta$ lifts to a unital completely
positive map 
$B\rtimes_\varphi\Z\to \ell^\I(\Ot)$ which agrees with 
$\theta$ on the canonical copy of $B$. 
Denote by $u\in  B\rtimes_{\varphi}\Z$ the canonical
unitary implementing $\varphi$
For $n\in\N$, let $\phi_n\colon B\rtimes_\varphi\Z\to M_n(B)$ and $\psi_n\colon M_n(B)\to B\rtimes_\varphi\Z$
be the unital completely positive maps given as follows
\[\phi_n(bu^j)=\begin{cases*}
      \sum\limits_{k=j}^{n-1}\varphi^{-k}(b)e_{k,k-j} & if $0\leq j<n$ \\
      \sum\limits_{k=0}^{j+n-1}\varphi^{-k}(b)e_{k,k-j} & if $-n< j\leq 0$\\
      0 & otherwise,
    \end{cases*} \ \mbox{ and } \ \ \psi_n(b\otimes e_{j,k})=\frac{1}{n}\varphi^k(a)u^{k-j}.
\]
One readily checks that $\psi_n(\phi_n(bv^j))=\frac{n-j}{n}bv^j$ if $|j|< n$, and $0$ otherwise. In particular,
$\psi_n\circ\phi_n$ converges pointwise in norm to 
$\id_{B\rtimes_\varphi\Z}$, and thus
$\Theta\circ\psi_n\circ\phi_n$ converges pontwise in norm 
to $\Theta$.
Since $B\rtimes_\varphi\Z$ is separable, it follows from
Theorem~6 in~\cite{Arv_notes_1977} that the set of 
maps $B\rtimes_\varphi\Z\to (\Ot)_\I$
which have unital completely positive lifts 
into $\ell^\I(\Ot)$ is closed in the 
point-norm topology. In particular, 
it suffices to show that for every $n\in\N$, the map 
$\Theta\circ\psi_n\circ\phi_n$
has a unital completely positive lift which extends $\theta$.

Using nuclearity of $M_n$, let 
$\widetilde{\rho}_n\colon M_n\to \ell^\I(\Ot)$ be any 
unital compeltely positive lift of 
the composition
\[\xymatrix{
M_n\ar[rr]^-{x\mapsto x\otimes 1_B} 
&& M_n(B)\ar[r]^-{\psi_n} & B\rtimes_\varphi\Z\ar[r]^-{\Theta} & (\Ot)_\I.}
\]
Let $\rho_n\colon M_n(B) \to \ell^\I(\Ot)$ be the linear 
map determined by
\[\rho_n(b\otimes e_{j,k})=\widetilde{\rho}_n(e_{j,j})^{1/2}\theta(b)\widetilde{\rho}_n(e_{k,k})^{1/2}\]
for all $b\in B$ and $j,k=0,\ldots,n-1$.
Then $\rho_n$ is unital. We claim that $\rho_n$ is 
positive. Since a positive element in $M_n(B)$ is the 
sum of $n$ elements of the form 
$\sum_{j,k=0}^{n-1} b_j^*b_k\otimes e_{j,k}$
for some $b_0,\ldots,b_{n-1}\in B$, it suffices to show 
that $\rho_n$ preserves positivity of such elements.
Given $b_0,\ldots,b_{n-1}\in B$, set
\[b=\sum_{j,k=0}^{n-1} b_j^*b_k\otimes e_{j,k}\in M_n(B)
  \ \ \mbox{ and } \ \ x=\sum_{j=0}^{n-1}\theta(b_j)\widetilde{\rho}_n(e_{j,j})^{1/2}\in \ell^\I(\Ot).\]
Then $\rho_n(b)=x^*x\geq 0$, as desired. It follows that 
$\rho_n$ is positive, and one shows in a similar way 
that it is completely positive.

Note that $\pi\circ\rho_n=\Theta\circ \psi_n$,
and this $\rho_n\circ\phi_n$ is a lift for 
$\Theta\circ \psi_n\circ\phi_n$ which agrees
with $\theta$ on the canonical copy of $B$. 
Thus there exists a unital completely positive lift
for $\Theta$ which extends $\theta$.

Using Lemma~2.2 in~\cite{KirPhi_embedding_2000},
find a unital embedding 
$\sigma\colon B\rtimes_{\varphi}\Z\to\Ot$ satisfying
$\sigma(b)=\theta(b)$ for all $b\in B$, and set $w=\sigma(u)$. 
Consider the following embeddings, where 
the first one is the restriction of $\sigma$ to $B$:
\[(B,\varphi)\hookrightarrow (\Ot,\Ad(w))\hookrightarrow (\Ot\otimes\Ot,\Ad(w)\otimes\Psi).\]
Note that $\Ad(w)$ is unitarily equivalent to $\id_{\Ot}$,
and that $\id_{\Ot}\otimes \Psi$ is conjugate to $\Psi$.
It follows that
$\Ad(w)\otimes\Psi$ is unitarily equivalent to 
$\id_\Ot\otimes\Psi$
of $\Ot\cong \Ot\otimes\Ot$. Since $\id_\Ot\otimes\Psi$ is conjugate to $\Psi$,
and thus there is a unital, equivariant embedding
$(B,\varphi)\to (\Ot,\Ad(v)\circ\Psi)$ for some unitary $v\in\Ot$.

Since $(\Ot,\id_{\Ot})$ embeds unitally into $(\Ot,
\Psi)$, it follows that is also embeds unitally into 
$(\Ot\otimes\Ot,\Ad(w)\otimes\Psi)$. Thus the last 
part of the statement also follows from the construction
above.
\end{proof}

\begin{cor}\label{thm:EmbedExact}
Let $A$ be a unital, separable, exact \ca, and let 
$\alpha\colon\T\to\Aut(A)$ be an action with the \Rp.
Denote by $\gamma\colon \T\to\Aut(\Ot)$ the 
unique action with the Rokhlin property 
(see \autoref{cor:UniqueActionOt}).
Then there exists a unital, equivariant embedding 
$(A,\alpha) \hookrightarrow (\Ot,\gamma)$.
\end{cor}
\begin{proof} Note that $A^\alpha$ is unital, separable and 
exact, and that the predual automorphism $\check{\alpha}\in\Aut(A^\alpha)$
is approximately inner by \autoref{thm:DualityRpAr}.
Use \autoref{thm:EmbedExactAutom} to find a unitary 
$v\in\Ot$ and a unital embedding 
\[\label{eqn:Emd}\tag{4.7}\iota\colon (A^\alpha,\check{\alpha})\to (\Ot,\Ad(v)\circ\Psi).\] 

Set $\Phi=\Ad(u)\circ\Psi$. 
Since $\Psi$ is aperiodic, the same is true for 
$\Phi$. 
It follows from parts~(3) and~(4) of 
\autoref{prop:KirAperiodic} that $\Ot\rtimes_\Phi \Z$
is a Kirchberg algebra satisfying the UCT, and it
has trivial $K$-theory by the Pimsner-Voiculescu
exact sequence. Thus $\Ot\rtimes_\Phi\Z$ is isomorphic to $\Ot$. Moreover, since $\Psi$ is
approximately representable by construction, the 
same is true for $\Phi$. 
By \autoref{thm:DualityRpAr}, it follows that 
the dual action $\widehat{\Phi}$ has the \Rp, and 
thus $\widehat{\Phi}$ is conjugate to $\gamma$ 
by \autoref{cor:UniqueActionOt}.

Applying crossed products to (\ref{eqn:Emd}),
and identifying $(A^\alpha\rtimes_{\check{\alpha}}\Z,\widehat{\check{\alpha}})$ with $(A,\alpha)$ 
(see \autoref{thm:RpisDual}), and 
identifying $(\Ot\rtimes_\Phi \Z,\widehat{\Phi})$
with $(\Ot,\gamma)$ as in the paragraph above, 
we obtain a unital homomorphism
$\widetilde{\iota}\colon (A,\alpha)\to (\Ot,\gamma)$.
Finally, $\widetilde{\iota}$ is injective because 
so is $\iota$ and $\Z$ is amenable. 
\end{proof}

\begin{rem}
We point out that it is possible to give a simpler
proof of \autoref{thm:EmbedExact} which avoids \autoref{thm:EmbedExactAutom}, by applying 
Lemma~4.7 in~\cite{AraKub_compact_2017} to any 
embedding $A\hookrightarrow \Ot$. This argument 
even has
the advantage of working for arbitrary compact groups.
We have taken a more indirect approach because
\autoref{thm:EmbedExact} by itself is not enough to prove 
\autoref{thm:EveryRpKKequivKirch}, 
and in its proof we will need to use 
\autoref{thm:EmbedExactAutom} instead.
\end{rem}

Recall that a homomorphism $\pi\colon A\to B$
is said to be \emph{full} if for every $a\in A$
with $a\neq 0$, the ideal in $B$ 
generated by $\pi(a)$ is all of $B$. 
If $\varphi\in\Aut(A)$ and $\psi\in\Aut(B)$ satisfy
$\psi\circ\pi=\pi\circ\varphi$, then the 
$\T$-equivariant
homomorphism $\widehat{\pi}\colon A\rtimes_\varphi\Z
\to B\rtimes_\psi\Z$ induced by $\pi$ is full if and 
only if so is $\pi$.

The following easy observation will be used in the 
proof of \autoref{thm:EveryRpKKequivKirch}.

\begin{rem}\label{rem:FullDirLimSimple}
If $(A_n,\pi_n)_{n\in\N}$ is an inductive system and 
$\pi_n$ is full for every $n\in\N$, then 
$\varinjlim(A_n,\pi_n)$ is simple.
\end{rem}

We are ready to show that every Rokhlin action
on a separable, nuclear \ca\ is $KK^\T$-equivalent 
to a Rokhlin action on a Kirchberg algebra. Our 
proof roughly follows arguments of Kirchberg,
specifically as in Chapter~11 
of~\cite{Kir_Classif}. However, we need to make 
the choices carefully and perform some 
computations in equivariant $KK^\T$-theory 
using the 
\emph{continuous} Rokhlin property from \cite{Gar_Kir2}, in order to guarantee that 
the resulting action on the Kirchberg algebra
has the Rokhlin property. (We do not seem to 
be able to guarantee this using the Cuntz-Pimsner
construction from Theorem~2.1 in~\cite{Mey_classification_2019}.)

\begin{thm}\label{thm:EveryRpKKequivKirch}
Let $A$ be a separable, nuclear \uca, and let
$\alpha\colon \T\to\Aut(A)$ be an action with the \Rp. 
Then there exist a unital Kirchberg algebra $D$ 
and a circle action $\delta\colon \T\to\Aut(D)$
with the \Rp, 
such that $(A,\alpha)\sim_{KK^\T}(D,\delta)$ unitally.
\end{thm}
\begin{proof}
Let $v\in \Ot$ be a unitary as in the conclusion of \autoref{thm:EmbedExact}, and set $\Phi=\Ad(v)\circ\Psi$. Fix
unital embeddings 
\[\tau\colon (\Ot,\id_\Ot)\hookrightarrow 
(\Ot,\Phi) \ \ \mbox{ and } \ \ 
\rho\colon (A^\alpha,\check{\alpha})\to (\Ot,\Phi).\]

We prove the theorem in two steps. First,
assume that there is a unital equivariant embedding
$\sigma\colon (\Ot,\Phi)\to (A^\alpha,\check{\alpha})$.
Let $s_1,s_2\in \mathcal{O}_2$ be 
the canonical generating isometries, and note that
$\tau(s_1)$ and $\tau(s_2)$ are $\Phi$-invariant. 
Set $t_j=\sigma(\tau(s_j))$ for $j=1,2$. Then $t_j$
is an isometry, $\check{\alpha}(t_j)=t_j$, and $t_1t_1^*+t_2t_2^*=1$. 
 Define a unital equivariant map
$\pi\colon (A^\alpha,\check{\alpha})\to (A^\alpha,\check{\alpha})$
by 
\[\pi(a)=t_1\sigma(\rho(a))t_1^*+t_2at_2^*\] for all $a\in A^\alpha$. Then $\pi$ is full. Indeed, if 
$a\in A\setminus\{0\}$, then $\rho(a)\neq 0$ and 
thus pure infiniteness of $\Ot$ implies that
there exist $x,y\in\Ot$ with $x\rho(a)y=1_\Ot$.
Then 
\[\sigma(x)t_1^*\pi(a)t_1\sigma(y)=\sigma(x\rho(a)y)=1,\]
and thus $\pi(a)$ generates $A$ as an ideal, as desired.
\vspace{0.2cm}

\textbf{Claim~1:} \emph{We have $KK^\Z(\pi)=KK^\Z(\id_{A^\alpha})$.} 
By Theorem~F in the introduction of~\cite{Gar_Kir2}
(see also Corollary~3.10 there), 
the action $\gamma\colon\T\to\Aut(\Ot)$ has the 
continuous Rokhlin property, and thus it is
unitally $KK^\T$-equivalent to 
$\big(C(\T)\otimes\Ot,\texttt{Lt}\otimes\id_{\Ot}\big)$ by Theorem~C of~\cite{Gar_Kir2}.
By Baaj-Skandalis duality, and since the dual of 
$\gamma$ is $KK^\Z$-equivalent to $\Phi$, 
we deduce that $(\Ot,\Phi)$ is $KK^\Z$-equivalent
to $(\Ot,\id_{\Ot})$. Thus
\[KK^\Z\big((A^\alpha,\check{\alpha}),(\Ot,\Phi)\big)=0.\]
Since $\sigma\circ\rho$
factors through $(\Ot,\Phi)$, it must be $KK^\Z$-trivial.
We conclude that $KK^\Z(\pi)=KK^\Z(\id_{A^\alpha})$. 
\vspace{0.2cm}

We deduce from the claim above that
$\pi$ is a $KK^\Z$-equivalence, and that 
$\widehat{\pi}$ is a $KK^\T$-equivalence.
Set $(C_0,\theta_0)=\varinjlim \big((A^\alpha,\check{\alpha}),\pi\big)$.
Then $C_0$ is simple because $\pi$ is full 
(see \autoref{rem:FullDirLimSimple}), 
and it is nuclear, unital
and separable because so is $A^\alpha$. 
Denote by $\Pi\colon (A^\alpha,
\check{\alpha})\to (C_0,\theta_0)$ the canonical equivariant map into the 
limit. 
\vspace{0.2cm}

\textbf{Claim~2:} \emph{$\Pi$ is a $KK^\Z$-equivalence.}
(This is not immediate from Claim~1, 
since $KK^\Z$ 
is not a continuous functor.)
By the second
paragraph on page 287 of \cite{MeyNes_homological_2010},
it suffices to show that the 
class
$\mathcal{F}(\Pi)\in KK(A^\alpha, C_0)$ that $\Pi$ 
induces under the forgetful functor $\mathcal{F}$,
is a $KK$-equivalence.
This is contained in Kirchberg's proof (specifically
Sections~11.2 and 11.3 in~\cite{Kir_Classif}) there),
so the claim follows.

\vspace{0.2cm}

Set $(C,\theta)=(C\otimes\OI,\theta_0\otimes\id_{\OI})$. Then $C$
is a unital Kirchberg algebra, and 
$(C,\theta)\sim_{KK^\Z}(A^\alpha,\check{\alpha})$ unitally, since
$(\OI,\id_{\OI})\sim_{KK^\Z}(\C,\id_\C)$ unitally.
\vspace{0.2cm}

\textbf{Claim~3:} \emph{$\theta$ is aperiodic and approximately representable.}
It suffices to prove the claim for $\theta_0$.
Since $(C_0,\theta_0)$ is the direct limit
of $(A^\alpha,\check{\alpha})$ and $\check{\alpha}$
is approximately representable, it follows that so is $\theta_0$.
Note that there is an isomorphism 
$C_0=\varinjlim(A,\widehat{\pi})$. 
Let
$\widehat{\pi}\colon (A,\alpha)\to (A,\alpha)$ be the 
map induced by $\pi$.
Then $\widehat{\pi}$ is full, because so is $\pi$. 
It therefore follows
from \autoref{rem:FullDirLimSimple}
that $C_0$ is simple. By part~(1) 
of \autoref{prop:KirAperiodic}, 
we deduce that $\theta_0$ is aperiodic. 
\vspace{0.2cm}

Set $D=C\rtimes_\theta\Z$ and $\delta=\widehat{\theta}$. 
Since $C$ is a Kirchberg algebra and $\theta$ is 
aperiodic, it follows from part~(3)
of \autoref{prop:KirAperiodic} that $D$ is a unital Kirchberg algebra.
Moreover, $\delta$ has the Rokhlin property by \autoref{thm:DualityRpAr},
since $\theta$ is approximately representable by
Claim~3. 
Finally, by Claim~2, 
the equivariant map $\Pi\otimes \id_{\OI}$ 
is a unital $KK^\Z$-equivalente $(A^\alpha,\check{\alpha})\sim_{KK^\Z}
(C,\theta)$. By taking crossed products, we obtain a unital
$KK^\T$-equivalence $(A,\alpha)\sim_{KK^\T}(D,\delta)$.
This proves the theorem in the case that 
there is a unital equivariant embedding
$\sigma\colon (\Ot,\Phi)\to (A^\alpha,\check{\alpha})$.

We turn to the general case.
Let $\mathcal{O}_\I^{(0)}$ be the unique unital UCT Kirchberg algebra which is $KK$-equivalent to $\OI$ and whose class
of the unit is zero.
\vspace{0.2cm}

\textbf{Claim 4:} \emph{there exist an aperiodic automorphism $\varphi\in\Aut(\mathcal{O}_\I^{(0)})$
and a unital embedding $\iota \colon \Ot\hookrightarrow \mathcal{O}_\I^{(0)}$
such that $\varphi\circ \iota=\iota\circ \Phi$.} 
Fix an injective homomorphism $\nu\colon \Ot\to \OI$,
and set $p=1_\OI-\nu(1)$, which is a projection in 
$\OI$.
Adopt the notation of \autoref{nota:Psi}, 
in particular we will use the unital homomorphisms 
$\rho_n\colon M_n\to \Ot$ and 
the permutation unitaries $u_n\in M_n$.
For $n\in\N$, set 
$w_n=\nu(\rho_n(u_n))+p$,
which is a unitary in $\OI$.
Consider the automorphism 
$\widetilde{\psi}= 
\bigotimes_{n=1}^\I\Ad(w_n)$ of 
$\bigotimes_{n=1}^\I\OI$, and note that
the restriction of $\widetilde{\psi}$ to the 
invariant subalgebra 
$\bigotimes_{n=0}^\I\nu(\Ot)$ can be identified
with $\bigotimes_{n=1}^\I\Ad(\rho_n(u_n))$.
Fix an isomorphism $\widetilde{\kappa}\colon\bigotimes_{n=1}^\I\OI\to \OI$,
and let $\mu\colon \Ot\to\OI$ be the following composition 
\[\xymatrix{\Ot\ar[r]^-{\kappa^{-1}}_-{\cong} &
 \bigotimes_{n=1}^\I \Ot\ar[rr]^-{\bigotimes\limits_{n=1}^\I\nu} && \bigotimes_{n=1}^\I\OI\ar[r]^-{\widetilde{\kappa}}_-{\cong} & \OI.}
\]
Set $w=\mu(v)$ and 
$\psi=\widetilde{\kappa}\circ\Ad(w)\circ \widetilde{\psi}\circ\widetilde{\kappa}^{-1}\in\Aut(\OI)$.
Set $e=\mu(1)$, which is 
a projection in $\OI$ satisfying $\psi(e)=e$
and $[e]=0$ in
$K_0(\OI)\cong\Z$. In particular, there is an
injective homomorphism $j\colon 
\mathcal{O}_{\I}^{(0)}\to \OI$ whose image is 
$e\OI e$. 
Note that $\psi$ leaves the image of 
$j$ invariant.
Then $\mu(v)$ is a unitary in $e\OI e$,
and we let $\varphi\in\Aut(\mathcal{O}_{\I}^{(0)})$ be given by
$\varphi =j^{-1}\circ  \psi\circ j$. 
Let $\iota\colon \Ot\to \mathcal{O}_{\I}^{(0)}$
be the unital homomorphism 
given by $\iota=j^{-1}\circ \mu$.
One readily checks that $\varphi$ and $\iota$
satisfy the conditions in the claim.
\vspace{0.2cm}

\textbf{Claim~5:} \emph{any automorphism
$\varphi\in\Aut(\mathcal{O}_\I^{(0)})$ satisfying the conclusion of Claim~4 satisfies
$(\mathcal{O}_\I^{(0)},\varphi)\sim_{KK^\Z}
(\C,\id_\C)$.}
Since 
$(\mathcal{O}_\I^{(0)},\id_{\mathcal{O}_\infty^{(0)}})$
is $KK^\Z$-equivalent to $(\C,\id_\C)$, it suffices
to show that 
$(\mathcal{O}_\I^{(0)},\varphi)$ is $KK^\Z$-equivalent to $(\mathcal{O}_\I^{(0)},\id_{\mathcal{O}_\infty^{(0)}})$.
To prove this, fix an aperiodic, approximately representable
automorphism $\phi_0\in\Aut(\OI)$ (see, for 
example Proposition~3.3 in~\cite{Gar_Kir2}),
and identify $\phi_0\otimes\id_{\OI}$ with 
an automorphism of $\mathcal{O}_\I^{(0)}\cong
\OI\otimes \mathcal{O}_\I^{(0)}$.
Then 
$KK(\varphi)=KK(\id_{\mathcal{O}_\infty^{(0)}})
=KK(\phi)$. By Theorem~9 
in~\cite{Nak_aperiodic_2000}, it follows that 
$\varphi$ and $\phi$ are cocycle conjugate,
and thus the dual actions $\widehat{\varphi}$ and $\widehat{\phi}$ are conjugate. 
Since $\phi$ is approximately representable, 
it follows from \autoref{thm:DualityRpAr}
that $\widehat{\varphi}$ has the 
Rokhlin property. By Corollary~3.10 
in~\cite{Gar_Kir2} (see also the comments
immediately after it), we deduce that
$\widehat{\varphi}$ has the \emph{continuous}
Rokhlin property, and thus there is a
$KK^\T$-equivalence
\[\big(C(\T)\otimes\mathcal{O}_\infty^{(0)},\texttt{Lt}\otimes\id_{\mathcal{O}_\infty^{(0)}}\big)\sim_{KK^\T}(\mathcal{O}_\infty^{(0)}\rtimes_{\varphi}\Z,\widehat{\varphi})\]
by Theorem~C of~\cite{Gar_Kir2}. Taking crossed
products and using Takai duality, we deduce 
that $(\mathcal{O}_\I^{(0)},\id_{\mathcal{O}_\infty^{(0)}})\sim_{KK^\Z}
(\mathcal{O}_\I^{(0)},\varphi)$, as desired.
\vspace{0.2cm}

Let $\varphi$ be an automorphism satisfying the 
conclusion of Claim~4. Then clearly 
there is a unital equivariant embedding of
$(\Ot,\Phi)$ into 
$(A^\alpha\otimes \mathcal{O}_\I^{(0)},\check{\alpha}\otimes\varphi)$.
Using the first part of this proof, let $\delta_0\colon
\T\to\Aut(D_0)$ be an action with the Rokhlin property
on a unital Kirchberg algebra $D_0$ such that
$(D_0,\delta_0)$ is $KK^\T$-equivalent to the 
dual system of
$(A^\alpha\otimes \mathcal{O}_\I^{(0)},\check{\alpha}\otimes\varphi)$. 
Said dual system is $KK^\T$-equivalent to 
$(A,\alpha)$ by Claim~5, and thus
there is a 
$KK^\T$-equivalence 
$\eta\in KK^\T\big((A,\alpha),(D_0,\delta_0)\big)$. Since $\delta_0$ has the Rokhlin
property, by \autoref{cor: FixedPtCP}
there is a canonical isomorphism
$K_0(D_0\rtimes_{\delta_0}\T)\cong 
K_0(D_0^{\delta_0})$. Combining this with the 
natural
group isomorphism $K_0^\T(D_0,\delta_0)\cong 
K_0(D_0\rtimes_{\delta_0}\T)$ given by 
Julg's theorem, it follows that
there is a natural identification
$K_0^\T(D_0,\delta_0)\cong K_0(D_0^{\delta_0})$. Since $D_0^{\delta_0}$ is a Kirchberg algebra
by part~(3) of \autoref{prop:KirAperiodic},
any element of its $K_0$-group can be realized by a 
projection in $D_0^{\delta_0}$. 
It follows that there exists a projection $q\in D_0^{\delta_0}$
such that $[1_A]\times \eta=[q]$ in 
$K_0^\T(D_0,\delta_0)$.
Set $D=qD_0q$ and let $\delta\colon\T\to\Aut(D)$
denote the restriction of $\delta_0$ to $D$. 
Then $D$ is a unital Kirchberg algebra, and 
$\delta$ has the Rokhlin property by part~(3)
of Theorem~2.5 in~\cite{Gar_CptRok}.
Finally, it is clear that $\eta$ implements a unital
$KK^\T$-equivalence $(A,\alpha)\sim_{KK^\T}(D,\delta)$,
as desired.
\end{proof}


\begin{thebibliography}{Gar14b}

\bibitem{AraKub_compact_2017}
{\sc Y.~Arano and Y.~Kubota}, \emph{Compact Lie group actions with the continuous Rokhlin property}. J. Funct. Anal. 272 (2017), no. 2, 522--545. 

\bibitem{Arv_notes_1977} {\sc W. Arveson}, 
{\it Notes on extensions 
of $C^*$-algebras}, Duke Math. J. 44 (1977), pp.~329--355.



\bibitem{BarSza_sequentially_2017}
{\sc S.~Barlak and G.~Szabo}, {\em Sequentially split $\ast$-homomorphisms between C*-algebras}. Internat. J. Math. 27 (2016), no. 13, 1650105, 48 pp.

\bibitem{BarSzaVoi_spatial_2017}
{\sc S.~Barlak, G.~Szabo, and C.~Voigt}, {\em The spatial Rokhlin property for actions of compact quantum groups}. J. Funct. Anal. 272 (2017), no. 6, 2308--2360.
 
\bibitem{Con_classification_1976}
{\sc A.~Connes}, {\em {Classification of injective factors. {C}ases {$II_{1},$}
  {$II_{\infty },$} {$III_{\lambda },$} {$\lambda \not=1$}}}, Ann. of Math.
  (2), 104 (1976), pp.~73--115.


\bibitem{EllGonLinNiu_classification_2015}
{\sc G.~Elliott, G.~Gong, H.~Lin, and Z.~Niu}, {\em {On the classification
  of simple amenable {$C^*$}-algebras with fintie decomposition rank, II}},
  (2015).
\newblock Preprint, arXiv:1507.03437.
  
\bibitem{Fuc_direct_1970}
{\sc L.~Fuchs}, \emph{III Direct sums of cyclic groups},
Pure and Applied Mathematics 36 (1970), no.~1, 
72--96.
  

\bibitem{Gar_circle_2018}
{\sc E.~Gardella}, {\em {Circle actions on UHF-absorbing {$C^*$}-algebras}},
  Houston J. Math., 44 (2018), no. 2, pp.~571--601.

\bibitem{Gar_thesis_2015}
{\sc Gardella, Eusebio}, {\em Compact group actions on C*-algebras: classification, 
non-classifiability, and crossed products and rigidity results for $L^p$-operator algebras.}
Thesis (Ph.D.)--University of Oregon. 2015. 
  
\bibitem{Gar_crossed_2014}
\leavevmode\vrule height 2pt depth -1.6pt width 23pt, {\em {Crossed products by
  compact group actions with the Rokhlin property}}, J. Noncommut. Geom., 11 (2017), no. 4, pp.~1593--1626.

\bibitem{Gar_Kir2}
\bysame, \emph{{Equivariant KK-theory and the continuous Rokhlin property}}, to appear in IMRN (2020).

\bibitem{Gar_CptRok}
\bysame, \emph{{Compact group actions with the Rokhlin property}}, Trans. Amer. Math. Soc. 371 (2019), no.~4, 2837--2874.
  
 \bibitem{GarSan_equivariant_2016}
{\sc E.~Gardella and L.~Santiago}, {\em {Equivariant {$*$}-homomorphisms,
  {R}okhlin constraints and equivariant {UHF}-absorption}}, J. Funct. Anal.,
  270 (2016), pp.~2543--2590.



\bibitem{HirWin_Rp}
{\sc I.~Hirshberg and W.~Winter}, \emph{{Rokhlin actions and self-absorbing \ca s}},
  Pacific Journal of Mathematics 233 (2007), no.~1, 125--143.

\bibitem{Izu_RpI_2004}
{\sc M.~Izumi}, \emph{{Finite group actions on \ca{s} with the Rohlin property. I}},
  Duke Math. J. 122 (2004), no.~2, 233--280.

\bibitem{Izu_RpII_2004}
\bysame, \emph{{Finite group actions on \ca{s} with the Rohlin property. II}},
  Adv. Math. 184 (2004), no.~1, 119--160.

\bibitem{JeoOsa}
{\sc J.~Jeong and H.~Osaka}, \emph{{Extremally rich $C^*$-crossed products and the
  cancellation property}}, J. Aust. Math. Soc., Ser. A 64 (1998),
  no.~3, 285--301.

\bibitem{Kir_Classif}
{\sc E.~Kirchberg}, \emph{{The classification of purely infinite $C^*$-algebras
  using Kasparov’s theory}}, in preparation, 1994.

\bibitem{KirPhi_embedding_2000}
{\sc E.~Kirchberg and N.~C.~Phillips}, 
{\em Embedding of exact C*-algebras in the Cuntz algebra $\mathcal{O}_2$}. 
J. Reine Angew. Math. 525 (2000), 17--53. 
  
\bibitem{Kis_outer}
{\sc A.~Kishimoto}, \emph{{Outer automorphisms and reduced crossed products of simple
  \ca{s}}}, Commun. Math. Phys. 81 (1981), 429--435.

\bibitem{Kis_rohlin_1995}
\leavevmode\vrule height 2pt depth -1.6pt width 23pt, {\em {The {R}ohlin property for automorphisms of {UHF}
  algebras}}, J. Reine Angew. Math., 465 (1995), pp.~183--196.

\bibitem{Kis_rohlin_1996}
\leavevmode\vrule height 2pt depth -1.6pt width 23pt, {\em {The {R}ohlin
  property for shifts on {UHF} algebras and automorphisms of {C}untz
  algebras}}, J. Funct. Anal., 140 (1996), pp.~100--123.  


\bibitem{KisKum_class_1998}
{\sc A.~Kishimoto and A.~Kumjian}, {\em The {$\rm Ext$} class of an
  approximately inner automorphism}, Trans. Amer. Math. Soc., 350 (1998),
  pp.~4127--4148.
  

\bibitem{Lin_Book}
{\sc H.~Lin}, \emph{{An Introduction to the Classification of Amenable C*-Algebras}},
  {World Scientific Pub. Co. Inc.}, 2001.




\bibitem{Mat_ZNactions_2011}
{\sc H.~Matui}, {\em {{$\mathbb
  Z^N$}-actions on {UHF} algebras of infinite type}}, J. Reine Angew. Math.,
  657 (2011), pp.~225--244.

\bibitem{Mey_classification_2019}
{\sc R.~Meyer}, {\em On the classification of 
group actions on 
C*-algebras up to equivariant KK-equivalence}.
Preprint arXiv:1906.11163, 2019.  
  
\bibitem{Mey_more_2020}
{\sc R.~Meyer}, {\em A more general method to 
classify up to equivariant KK-equivalence II: Computing obstruction classes}. 
K-theory in algebra, analysis and topology, 237--277,
Contemp. Math., 749, Amer. Math. Soc., Providence, RI, 2020.
  
\bibitem{MeyNes_homological_2010}
{\sc R.~Meyer, and R.~Nest}, {\em {Homological algebra in bivariant K-theory and other triangulated categories}}. I. 
Triangulated categories, 236--289,
London Math. Soc. Lecture Note 375, 2010.

\bibitem{Nak_aperiodic_2000}
{\sc H.~Nakamura}, \emph{{Aperiodic automorphisms of nuclear purely infinite simple
  $C^*$-algebras}}, Ergod. Th. and Dynam. Sys. 20(2000), 1749--1765.  
  
\bibitem{Naw_finite_2016}
{\sc N.~Nawata}, \emph{Finite group actions on certain stably projectionless C*-algebras with the Rohlin property.} 
Trans. Amer. Math. Soc. 368 (2016), no. 1, 
471--493. 
  
\bibitem{OlePed_partially_1986}
{\sc D.~Olesen and G.~Pedersen}, {\em Partially inner
C*-dynamical systems}, J. Funct. Anal. 66 (1986), pp.~262--281.

\bibitem{OsaPhi_CPRP}
{\sc H.~Osaka and N.~C. Phillips}, \emph{Crossed products by finite group actions
  with the rokhlin property}, {Math. Z.} 270 (2012), 19--42.
  
\bibitem{PacRae_twisted_1989}
{\sc J.~Packer and I.~Raeburn}, 
{\em Twisted crossed products of C*-algebras.}
Math. Proc. Cambridge Philos. Soc. 106 (1989), no. 2, 
293--311.

\bibitem{Ped_algebras_1979}
{\sc G.~Pedersen}, \emph{{\ca{s} and their automorphism groups}}, {London
  Mathematical Society Monographs. 14. London - New York -San Francisco:
  Academic Press. X}, 1979.


\bibitem{Phi_Classif}
{\sc N.~C. Phillips}, \emph{{A Classification Theorem for Nuclear Purely Infinite Simple
  $C^*$-algebras}}, Doc. Math. 5 (2000), 49--114.


\bibitem{Phi_equivSP} 
\bysame, \emph{{Equivariant semiprojectivity}}, preprint, arXiv:1112.4584,
  2012.

\bibitem{Ror_BookClassif}
{\sc M.~R{\o}rdam and E.~St{\o}rmer}, {\em Classification of nuclear
  {$C^*$}-algebras. {E}ntropy in operator algebras}, vol.~126 of Encyclopaedia
  of Mathematical Sciences, Springer-Verlag, Berlin, 2002.
  
\bibitem{RosSch_kunneth_1986}
{\sc J.~Rosenberg and C.~Schochet}, {\em 
The K\"unneth theorem and the universal coefficient theorem for equivariant K-theory and KK-theory.}
Mem. Amer. Math. Soc. 62 (1986), no. 348.

\bibitem{Sat_rohlin_2010}
{\sc Y.~Sato}, {\em The Rohlin property for automorphisms of the Jiang-Su algebra}. J. Funct.
Anal. 259 (2010), no. 2, pp. 453--476. 

\bibitem{Suz_equivariant_2020}
{\sc Y.~Suzuki}, {\em Equivariant $\mathcal{O}_2$-absorption theorem for exact groups}, (2020).
Preprint, arXiv:2004.09461.

\bibitem{Sza_equivariant_2018}
{\sc G.~Szabo}, \emph{Equivariant Kirchberg-Phillips-type absorption for amenable group actions},
Comm. Math. Phys. 361 (2018), no. 3, pp. 1115--1154.

\bibitem{TikWhiWin_quasidiagonality_2017}
{\sc A.~Tikuisis, S.~White, and W.~Winter}, {\em {Quasidiagonality of nuclear
  {$C^\ast$}-algebras}}, Ann. of Math. (2), 185 (2017), pp.~229--284.  


\end{thebibliography}

\providecommand{\bysame}{\leavevmode\hbox to3em{\hrulefill}\thinspace}
\providecommand{\MR}{\relax\ifhmode\unskip\space\fi MR }
\providecommand{\MRhref}[2]{%
  \href{http://www.ams.org/mathscinet-getitem?mr=#1}{#2}
}
\providecommand{\href}[2]{#2}

\end{document}